\documentclass[reqno,11pt]{amsart}

\usepackage{amsmath,amssymb,latexsym,soul,cite,mathrsfs,amsfonts}
\usepackage{color,enumitem,graphicx}
\usepackage[colorlinks=true,urlcolor=blue,
citecolor=red,linkcolor=blue,linktocpage,pdfpagelabels,
bookmarksnumbered,bookmarksopen]{hyperref}
\usepackage[english]{babel}
\usepackage{comment}
\usepackage{tikz}
\usetikzlibrary{intersections}
\setcounter{tocdepth}{2}
\usepackage[symbol]{footmisc}

\usepackage[left=2.9cm,right=2.9cm,top=2.8cm,bottom=2.8cm]{geometry}
\usepackage[hyperpageref]{backref}

\usepackage[colorinlistoftodos]{todonotes}
\makeatletter
\providecommand\@dotsep{5}
\def\listtodoname{List of Todos}
\def\listoftodos{\@starttoc{tdo}\listtodoname}
\makeatother

\usepackage{verbatim} 

\numberwithin{equation}{section}


\newtheorem{theorem}{Theorem}[section]
\newtheorem{proposition}[theorem]{Proposition}
\newtheorem{lemma}[theorem]{Lemma}
\newtheorem{corollary}[theorem]{Corollary}
\newtheorem{remark}[theorem]{Remark}

\newcommand\restr[2]{{
  \left.\kern-\nulldelimiterspace 
  #1 
  \vphantom{\big|} 
  \right|_{#2} 
  }}




\title[Zero energy critical points]
{Zero energy critical points of functionals depending on a parameter}


\author[H. Ramos Quoirin]{Humberto Ramos Quoirin}

\author[J. Silva]{Jefferson Silva}

\author[K. Silva]{Kaye Silva*} \thanks{*Corresponding author.}

\address{H. Ramos Quoirin  \newline \indent CIEM-FaMAF \newline \indent Universidad Nacional de C\'{o}rdoba, (5000)
	C\'{o}rdoba, Argentina}
\email{\tt humbertorq@gmail.com}

\address{J. Silva \newline\indent
	Instituto de Matem\'atica e Estat\'istica.   
	\newline\indent 
	Universidade Federal de Goi\'as,
	\newline\indent
	Rua Samambaia, 74001-970, Goi\^ania, GO, Brazil}
\email{\href{mailto:	jeffersonsantos.mat@gmail.com}{	jeffersonsantos.mat@gmail.com}}

\address{K. Silva  \newline\indent
	Instituto de Matem\'atica e Estat\'istica.   
	\newline\indent 
	Universidade Federal de Goi\'as,
	\newline\indent
Rua Samambaia, 74001-970, Goi\^ania, GO, Brazil}
\email{\href{mailto:kayesilva@ufg.br}{kayesilva@ufg.br}}

\thanks{Kaye Silva was partially supported by CNPq/Brazil under Grant 408604/2018-2.}



\subjclass[2010]{Primary  
35A15 
58E07  	
}
\keywords{zero energy critical point, nonlinear generalized Rayleigh quotient, Ljusternik-Schnirelman theory}

\pretolerance10000

\begin{document}

\begin{abstract}
We investigate zero energy critical points for a class of functionals $\Phi_\mu$ defined on a uniformly convex Banach space, and depending on a real parameter $\mu$. More precisely, we show the existence of a sequence $(\mu_n)$ such that $\Phi_{\mu_n}$ has a pair of critical points $\pm u_n$ satisfying $\Phi_{\mu_n}(\pm u_n)=0$, for every $n$. In addition, we provide some properties of $\mu_n$ and $u_n$. This result, which is proved via a fibering map approach (based on the {\it nonlinear generalized Rayleigh quotient} method \cite{I1}) combined with the Ljusternik-Schnirelman theory, is then applied to several classes of elliptic pdes.
\end{abstract}

\bigskip

\maketitle
\begin{center}
\begin{minipage}{12cm}
\tableofcontents
\end{minipage}
\end{center}

\bigskip

\maketitle
\section{Introduction}
\strut

This article is devoted to the existence of critical points with zero energy for a class of functionals depending on a real parameter.  Let $X$ be a uniformly convex Banach space equipped with $\|\cdot \| \in C^1(X \setminus \{0\})$, and $\Phi_\mu:X\to \mathbb{R}$  be the energy functional given by
\begin{equation*}
	\Phi_\mu=I_1-\mu I_2,
\end{equation*}
where $\mu\in \mathbb{R}$ and $I_1,I_2$ are even $C^1$ functionals satisfying $I_1(0)=I_2(0)=0$. Assume in addition that $I_2(u)\neq 0$ for each $u\in X\setminus\{0\}$, so that for any such $u$ we have $\Phi_\mu(u)=0$ if and only if 
\begin{equation*}
	\mu=\mu_0(u):=\frac{I_1(u)}{I_2(u)}.
\end{equation*}
This functional is obtained by the {\it nonlinear generalized Rayleigh quotient} method, which has been introduced by Il'yasov \cite{I1,I3}. The interest in the functional $\mu_0$ lies in the following relation, cf. \cite{I3}:
$$\mu_0'(u)=\frac{\Phi_{\mu_0(u)}'(u)}{I_2(u)}, \quad \forall u\neq 0.$$
Thus the couple $(\mu,u)$ satisfies $\Phi_\mu(u)=\Phi'_\mu(u)=0$ if, and only if, $\mu_0'(u)=0$ and $\mu_0(u)=\mu$.
We shall look for critical points (and critical values) of $\mu_0$ by considering the fibering maps associated to $\mu_0$, i.e. $\psi_{u}(t):=\mu_0(tu)$, defined for any $u\in X\setminus\{0\}$ and $t>0$. Let us assume the following condition:\\
\begin{itemize}
	\item[(H1)] $\psi_u \in C^2(0,\infty)$ for every $u \in X \setminus\{0\}$, $u \mapsto \psi'_u(t) \in C^1(X \setminus\{0\})$ for every $t>0$, and $\psi_{u}$ has at most one critical point $t_0(u)>0$, which is a non-degenerate global maximum point. Moreover, $D=\{u\in X\setminus\{0\}: t_0(u)\ \mbox{is defined}\}$ is a non-empty open set. \\
\end{itemize}

Our main purpose is to obtain couples $(\mu,u)$ solving the system $\Phi_\mu(u)=\Phi'_\mu(u)=0$  by looking for critical points of the even functional $\Lambda: D \to \mathbb{R}$ given by
\begin{equation*}
	\Lambda(u):=\psi_u(t_0(u))=\mu_0(t_0(u)u).
\end{equation*}
As a matter of fact, we shall see that critical points of $\Lambda$ (along with its critical values) provide all solutions of the system above. In other words,  $u$ is a critical point of  $\Lambda$ if, and only if, $t_0(u)u$ is a zero energy critical point of $\Phi_{\mu}$ with $\mu=\Lambda(u)$.  Such critical points will be found by dealing with $\tilde{\Lambda}$, the restriction of $\Lambda$ to $S \cap D$, where $S$ is the unit sphere in $X$.  It follows from (H1) that $S\cap D$ is a symmetric $C^1$ manifold (the most simple case being $D=X \setminus \{0\}$, so that $S \cap D=S$). It turns out that $S \cap D$ is given by a {\it natural constraint}, so that critical points of $\tilde{\Lambda}$ are also critical points of $\Lambda$. 
 We shall assume the following condition:\\
\begin{itemize}		
	\item[(H2)] $\Lambda$ is bounded, either from below or from above.\\
\end{itemize}

Let us recall that $\tilde{\Lambda}$ satisfies the Palais-Smale condition if any sequence $(u_n) \subset S \cap D$ such that $(\tilde{\Lambda}(u_n))$ is bounded and $\tilde{\Lambda}'(u_n) \to 0$ has a subsequence converging to some $u \in S \cap D$.

Critical points of  $\tilde{\Lambda}$ shall be obtained via the Ljusternik-Schnirelman principle. Given a nonempty symmetric and closed set $F \subset S \cap D$, we recall that $$\gamma(F):=\inf\{n \in \mathbb{N}: \exists h:F\to  \mathbb{R}^n \setminus \{0\} \mbox{ odd and continuous} \}$$ is the Krasnoselskii genus of $F$.  For every $n \in \mathbb{N}$ we set
\begin{equation}\label{dfn} \mathcal{F}_n=\{F\subset S\cap D: F \mbox{ is compact, symmetric, and } \gamma(F)\ge n\},\end{equation} and
\begin{equation*}
	\mu_n:=\begin{cases}\displaystyle
	\inf_{F\in \mathcal{F}_n}\sup_{u\in F}\Lambda(u), & \mbox{ if  $\Lambda$ is bounded from below}\\
	\displaystyle \sup_{F\in \mathcal{F}_n}\inf_{u\in F}\Lambda(u), & \mbox{ if $\Lambda$ is bounded from above.}
	 \end{cases}
\end{equation*}

 The previous discussion leads to our main result:
\begin{theorem}\label{THM1} Suppose that (H1) and (H2) hold.
\begin{enumerate}
\item If $\Lambda$ is bounded from below (respect. above) and $\mu<\mu_1=\displaystyle \inf_D \Lambda$ (respect. $\mu>\mu_1=\displaystyle \sup_D \Lambda$) then there exists no $u \in X \setminus \{0\}$ such that  $\Phi_\mu(u)=\Phi'_\mu(u)=0$.

\item Suppose that $\gamma(D\cap S)=\infty$ and $\tilde{\Lambda}$ satisfies the Palais-Smale condition at $\mu_n$ for every $n \in \mathbb{N}$. Then there exists a sequence of pairs $(\pm u_n)\subset X\setminus\{0\}$ such that $\Phi_{\mu_n}(\pm u_n)=0$ and $\Phi'_{\mu_n}(\pm u_n)=0$ for every $n$. Moreover $(\mu_n)$ is an increasing  (respect. decreasing) sequence if $\Lambda$ is bounded from below (respect. above).
\end{enumerate}
\end{theorem}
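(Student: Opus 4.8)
The plan is to dispatch part (1) directly from the variational characterization, and to obtain part (2) from the symmetric Ljusternik--Schnirelman minimax scheme applied to $\tilde{\Lambda}$, feeding the resulting critical points back into the equivalence recorded above.

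For part (1) I argue by contradiction. Suppose some $u \in X \setminus \{0\}$ satisfies $\Phi_\mu(u) = \Phi'_\mu(u) = 0$. Since $\Phi_\mu(u)=0$ and $I_2(u) \neq 0$, we must have $\mu = \mu_0(u)$; substituting into the Rayleigh identity gives $\mu_0'(u) = \Phi'_{\mu_0(u)}(u)/I_2(u) = \Phi'_\mu(u)/I_2(u) = 0$. Evaluating the fibering map along $u$ yields $\psi_u'(1) = \langle \mu_0'(u), u\rangle = 0$, so $t=1$ is a critical point of $\psi_u$. By (H1) the only possible critical point of $\psi_u$ is $t_0(u)$, whence $u \in D$ and $t_0(u)=1$; therefore $\Lambda(u) = \psi_u(1) = \mu_0(u) = \mu$. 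In the bounded-below case this forces $\mu \ge \inf_D \Lambda = \mu_1$, contradicting $\mu < \mu_1$, and the bounded-above case is entirely symmetric.

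For part (2), after replacing $\Lambda$ by $-\Lambda$ if necessary, I may assume $\Lambda$ is bounded from below and run the standard even minimax on $\tilde{\Lambda} \in C^1(S\cap D)$. The hypothesis $\gamma(D\cap S)=\infty$ ensures $\mathcal{F}_n \neq \emptyset$ for every $n$, while compactness of the competitors makes each $\mu_n$ finite; the nesting $\mathcal{F}_{n+1}\subset \mathcal{F}_n$ gives at once the asserted monotonicity of $(\mu_n)$. The core step is to show that each $\mu_n$ is a critical value of $\tilde{\Lambda}$: if not, the Palais--Smale condition at $\mu_n$ supplies, via a deformation lemma, an odd deformation lowering $\sup_F \tilde{\Lambda}$ strictly below $\mu_n$ for a near-optimal $F \in \mathcal{F}_n$; since the genus is preserved by odd homeomorphisms, the deformed set still lies in $\mathcal{F}_n$, contradicting the definition of $\mu_n$. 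Picking $v_n \in S \cap D$ with $\tilde{\Lambda}(v_n)=\mu_n$ and $\tilde{\Lambda}'(v_n)=0$, the natural-constraint property upgrades $v_n$ to a free critical point of $\Lambda$ on $D$; the equivalence recorded above then shows that $u_n := t_0(v_n)v_n$ satisfies $\Phi_{\mu_n}(u_n)=\Phi'_{\mu_n}(u_n)=0$, and evenness of $\Phi_{\mu_n}$ gives the same conclusion for $-u_n$.

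The main obstacle I anticipate is the deformation step. Because $D$ is merely open, $S\cap D$ is an open --- hence possibly incomplete --- submanifold of the sphere $S$, and a priori the negative pseudo-gradient flow could drive competitors toward $\partial D$, where $t_0$ degenerates and $\tilde{\Lambda}$ loses control. I expect the Palais--Smale condition at level $\mu_n$ to be precisely the tool that prevents this, by confining almost-critical sequences to a compact part of $S\cap D$ and thereby legitimizing the usual deformation and genus bookkeeping on the incomplete manifold. Verifying this confinement, together with the natural-constraint identity that transfers criticality from $\tilde{\Lambda}$ to $\Lambda$, are the two points that will require the most care.
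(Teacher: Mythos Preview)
Your proposal is correct and follows essentially the same route as the paper: part (1) reduces to showing that any zero-energy critical point $u$ has $\mu=\Lambda(u)$ (the paper packages this as Proposition~2.2(ii)), and part (2) applies the symmetric Ljusternik--Schnirelman scheme to $\tilde{\Lambda}$, then uses the natural-constraint identity (Proposition~2.3) and the translation back to $\Phi_\mu$ (Proposition~2.2(i)) to produce the pairs $(\mu_n,\pm u_n)$. The only difference is presentational: the paper invokes \cite[Corollary~4.17]{G} as a black box for the minimax step, whereas you sketch the deformation argument and flag the incompleteness of $S\cap D$ as the place where the Palais--Smale hypothesis does real work---a valid concern that the cited result is designed to absorb.
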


This result applies to several elliptic problems, as we shall see in Section 4. Among these problems, the most classical one is the concave-convex equation 
\begin{equation}
	\label{CC}
\left\{
\begin{array}
	[c]{lll}%
	-\Delta u =\mu |u|^{q-2}u+|u|^{r-2}u & \mathrm{in} & \Omega,\\
u=0  & \mathrm{on} & \partial\Omega,
\end{array}
\right. 
\end{equation}
where $\Omega\subset \mathbb{R}^N$ is a bounded domain, $\mu>0$, and $1<q<2<r<2^*$. We shall see that the energy functional of \eqref{CC} satisfies (H1) and (H2), with $\Lambda$ bounded from below. 

Theorem \ref{THM1} yields a sequence $(\mu_n,u_n) \subset (0,\infty) \times H_0^1(\Omega) \setminus \{0\}$ such that $u_n$ solves \eqref{CC} for $\mu=\mu_n$ and $\Phi_{\mu_n}(u_n)=0$, for every $n$. Moreover $\mu_n \to \infty$ as $n \to \infty$. On the other hand, it is known \cite{ABC,BW} that for any $\overline{\mu}>0$  this problem has infinitely many solutions $w_n$ whose energy grows up to infinity as $n \to \infty$, as well as infinitely many solutions $\tilde{w}_n$ whose energy converges to zero as $n \to \infty$. Moreover, from the definition of $\mu_1$ and \cite[Theorems 1.3 and 1.4]{I2}, one can find two decreasing and continuous branches of positive solutions, i.e. solutions $u_\mu,v_\mu$ such that $\mu \mapsto \Phi_{\mu}(u_\mu),\Phi_{\mu}(v_\mu)$ are continuous and decreasing in $(0,\mu^*)$, for some $\mu_1<\mu^*<\infty$ (see the full blue and red curves in Figure \ref{fig:M1}, respectively). In addition, $\Phi_{\mu}(u_\mu)>0$ if $\mu\in(0,\mu_1)$, $\Phi_{\mu_1}(u_{\mu_1})=0$ and $\Phi_{\mu}(u_\mu)<0$ if $\mu\in(\mu_1,\mu^*)$, whereas $\Phi_{\mu}(v_\mu)<\min\{0,\Phi_{\mu}(u_\mu)\}$ if $\mu\in (0,\mu^*)$.   

From the previous results one may conjecture that for every $n$ there exists a decreasing and continuous solution branch (the dashed blue curves in Figure \ref{fig:M1}) that reaches the zero level at $\mu=\mu_n$ and a corresponding decreasing and continuous solution branch (the dashed red curves in Figure \ref{fig:M1}) that exists at least for $0\leq \mu<\mu_n$. So the intersection points of the line $\mu=\overline{\mu}$ with these branches would correspond to the solutions $w_n,\tilde{w}_n$ found in \cite{ABC,BW}.  We also note that if $\Omega$ is an annulus then there exists infinitely many global continua $\mathcal{C}_n$ of solutions of \eqref{CC} bifurcating from the point $(\mu,u)=(0,0)$, see \cite[Figure 1 and Theorem 2.1]{BM}.  Moreover, the energy of the solutions $\underline{u}_n$ (respect. $\overline{u}_n$) in the lower (respect. upper) part of $\mathcal{C}_n$ tends to 0 (respect. $\infty$) as $n \to \infty$. We believe that in this case the solutions $\underline{u}_n$ (respect. $\overline{u}_n$) would correspond to the solutions in the red (respect. blue) curves of Figure 1 below. Furthermore, each red curve would meet the corresponding blue curve at a common endpoint, i.e. these two curves would be a continuum of solutions.

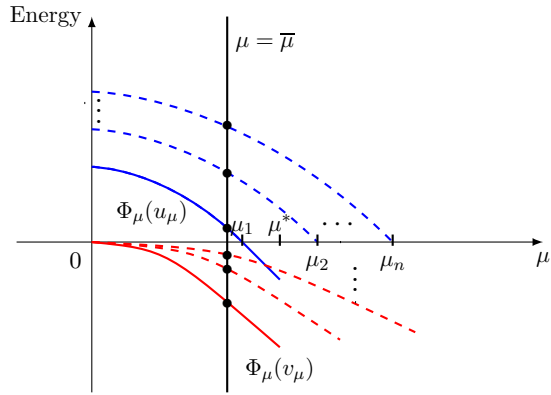
\begin{figure}[h]
	\centering
	\begin{tikzpicture}[>=latex]
		\draw[->] (-1,0) -- (6,0) node[below] {\scalebox{0.8}{$\mu$}};
		\foreach \x in {}
		\draw[shift={(\x,0)}] (0pt,2pt) -- (0pt,-2pt) node[below] {\footnotesize $\x$};
		\draw[->] (0,-2) -- (0,3) node[left] {\scalebox{0.8}{$\mbox{Energy}$}};
		\foreach \y in {}
		\draw[shift={(0,\y)}] (2pt,0pt) -- (-2pt,0pt) node[left] {\footnotesize $\y$};
		\node[below left] at (0,0) {\footnotesize $0$};
		\draw[blue,thick] (0,1) .. controls (0,1) and (1,1) .. (2,0);
		\draw[blue,thick] (2,0) .. controls (2.5,-.5) .. (2.5,-0.5);
		\draw[blue,thick,dashed] (0,1) .. controls (0,1) and (1,1) .. (2,0);
		\draw[blue,thick,dashed] (0,1.5) .. controls (0,1.5) and (1.5,1.5) .. (3,0);
		\draw [thick] (3.3,0) node[above]{$\cdots$} -- (3.3,0); 
		\draw[blue,thick,dashed] (0,2) .. controls (0,2) and (2,2) .. (4,0);
		\draw[red,thick] (0,0) .. controls (1,-.1) .. (2.5,-1.4);
		\draw[red,thick,dashed] (0,0) .. controls (1.5,-.1) .. (3.3,-1.3);
		\draw[red,thick,dashed] (0,0) .. controls (2,-0.1) .. (4.3,-1.2);
		\draw [thick] (3.5,-.8) node[above]{$\vdots$} -- (3.5,-.8); 
		\draw [thick] (-.1,1.85) node[right]{$\vdots$} -- (-.1,1.85);
		\draw [thick] (1.8,-2) -- (1.8,3);
		\draw (1.8,.18) node{\scalebox{0.8}{$\bullet$}};
		\draw (1.8,.91) node{\scalebox{0.8}{$\bullet$}};
		\draw (1.8,1.54) node{\scalebox{0.8}{$\bullet$}};
			\draw (1.8,-.18) node{\scalebox{0.8}{$\bullet$}};
			\draw (1.8,-.37) node{\scalebox{0.8}{$\bullet$}};
			\draw (1.8,-.82) node{\scalebox{0.8}{$\bullet$}};
		\draw [thick] (2,-.05) node[above]{\scalebox{0.8}{$\mu_1$}} -- (2,0.1); 
		\draw [thick] (3,-.05) node[below]{\scalebox{0.8}{$\mu_2$}} -- (3,0.1); 
		\draw [thick] (4,-.05) node[below]{\scalebox{0.8}{$\mu_n$}} -- (4,.1); 
			\draw [thick] (2.5,-0.05) node[above]{\scalebox{0.8}{$\mu^*$}} -- (2.5,0.1); 
		\draw  (.8,0.1) node[above]{\scalebox{0.8}{$\Phi_\mu(u_\mu)$}} ; 
	\draw  (2.5,-2) node[above]{\scalebox{0.8}{$\Phi_\mu(v_\mu)$}} ; 
	\draw  (2.3,2.4) node[above]{\scalebox{0.8}{$\mu=\overline{\mu}$}} ;
	\end{tikzpicture}
	\caption{Hypothetical energy curves depending on $\mu$ for \eqref{CC}} \label{fig:M1}
	
\end{figure}


A similar (and dual) scenario seems to arise in the Schrödinger-Poisson type problem
\begin{equation}\label{SP}
	\left\{
	\begin{array}
		[c]{lll}%
		-\Delta u+\omega u+\mu \phi u=|u|^{p-2}u & \mathrm{in} & \mathbb{R}^3,\\
		-\Delta\phi+a^2\Delta ^2u=4\pi u^2  & \mathrm{in} & \mathbb{R}^3,
	\end{array}
	\right. 
\end{equation}
where $p\in(2,3)$, $\omega>0$, and $a\ge 0$. This is a prototype of a second class of problems, where (H2) is now satisfied by $(\tilde{\Lambda})^{-1}$.

 When $a=0$ this problem reduces to the so called Schrödinger-Poisson system in $\mathbb{R}^3$. The reduction method, introduced in \cite{BF}, allows one to treat  this system as a single equation by noting that if
\begin{equation*}
	\phi_{0,u}=\frac{1}{|\cdot |} * u^2,
\end{equation*}
then the system is equivalent to
\begin{equation}\label{SPRedu}
	-\Delta u+\omega u+\mu \phi_{0,u}u=|u|^{p-2}u.
\end{equation}  A detailed study of equation \eqref{SPRedu} in the regime $p\in (2,6)$ has been carried out in  \cite{R}. The author showed that when $p\in (2,3)$ there exist two  positive solutions $u_\mu,w_\mu$ satisfying $\Phi_{\mu}(u_\mu)<0<\Phi_{\mu}(w_\mu)$ for $\mu>0$ small enough. These results were extended and generalized in \cite{DS,SS} to the problem \eqref{SP} with $a>0$, i.e., the Schrödinger equations coupled with the Bopp-Podolski Lagrangian of the electromagnetic field. A similar reduction method holds here: if
\begin{equation*}
	\phi_{a,u}=\frac{1-e^{-|\cdot|/a}}{|\cdot |}* u^2,
\end{equation*}
then the system is equivalent to 
\begin{equation}\label{SPReduGK}
	-\Delta u+\omega u+\mu \phi_{a,u}u=|u|^{p-2}u.
\end{equation}
It was shown in \cite{SS} that there exists $\mu_0^*>0$ and two increasing branches of positive solutions $u_\mu,v_\mu$ such that $\Phi_{\mu}(u_\mu)<0$ if $\mu\in(0,\mu_0^*)$, $\Phi_{\mu_0^*}(u_{\mu_0^*})=0$ and $\Phi_{\mu}(u_\mu)>0$ if $\mu\in(\mu_0^*,\mu_0^*+\varepsilon)$ for some $\varepsilon>0$, whereas $\Phi_\mu(v_\mu)>\max\{0,\Phi_\mu(u_\mu)\}$ for $\mu\in(0,\mu_0^*+\varepsilon)$, see the red and blue full curves  in Figure \ref{fig:M2}. By Theorem \ref{THM1} we shall obtain a sequence $(\mu_n,u_n) \subset (0,\infty) \times H_r^1(\mathbb{R}^3)\setminus \{0\}$ such that $u_n$ solves \eqref{SP} for $\mu=\mu_n$ and $\Phi_{\mu_n}(u_n)=0$, for every $n$. Moreover $\mu_n \to 0$ as $n \to \infty$. Note also that $\Phi_\mu$  has no nontrivial critical points for $\mu$ large enough, cf. \cite[Theorem 1.1]{SS}. As we will see, we have $\mu_1=\mu_0^*$, for a suitable choice of $I_1,I_2$.

  Thus the energy picture seems now to be like in Figure \ref{fig:M2}: for each $n$ there would be a red dashed increasing curve crossing the $\mu$ axis at $\mu=\mu_n$, and a blue dashed increasing curve, which would exist at least for $0\leq \mu<\mu_n$. One may also conjecture that for each $k\in \mathbb{N}$ and $\mu$ sufficiently small, there exist at least $2k$ solutions, which correspond to the black dots in the figure. Let us add that a similar structure seems to arise in a Kirchhoff equation (see subsection \ref{sk} below). 

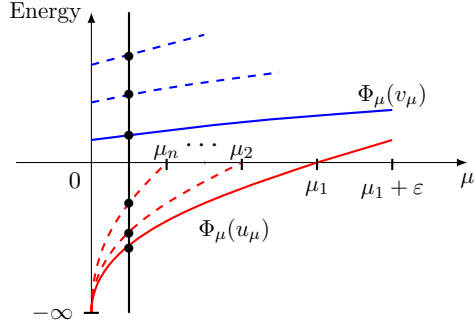
\begin{figure}[h]
	\centering
	\begin{tikzpicture}[>=latex]
		\draw[->] (-1,0) -- (5,0) node[below] {\scalebox{0.8}{$\mu$}};
		\foreach \x in {}
		\draw[shift={(\x,0)}] (0pt,2pt) -- (0pt,-2pt) node[below] {\footnotesize $\x$};
		\draw[->] (0,-2) -- (0,2) node[left] {\scalebox{0.8}{$\mbox{Energy}$}};
		\foreach \y in {}
		\draw[shift={(0,\y)}] (2pt,0pt) -- (-2pt,0pt) node[left] {\footnotesize $\y$};
		\node[below left] at (0,0) {\footnotesize $0$};
		\draw[red,thick] (0,-2) .. controls (0,-1) and (1.5,-.5) .. (3,0);
		\draw[red,thick] (3,0) -- (4,.3);
		\draw[red,thick,dashed] (0,-2) .. controls (0,-1) and (1,-.5) .. (2,0);
		\draw[red,thick,dashed] (0,-2) .. controls (0,-1) and (.5,-.5) .. (1,0);
		\draw [thick] (.5,2) -- (.5,-2);
		\draw[blue,thick] (0,.3) .. controls (2,0.55) .. (4,.7);
			\draw[blue,thick,dashed] (0,.8) .. controls (1,1) .. (2.5,1.2);
				\draw[blue,thick,dashed] (0,1.3) .. controls (.75,1.5) .. (1.5,1.7);
	\draw [thick] (1,-.1) node[above]{\scalebox{0.8}{$\mu_n$}} -- (1,0.05); 
		\draw [thick] (2,-.1) node[above]{\scalebox{0.8}{$\mu_2$}} -- (2,0.05); 
		\draw [thick] (3,-.1) node[below]{\scalebox{0.8}{$\mu_1$}} -- (3,0.05); 
		\draw [thick] (4,-.1) node[below]{\scalebox{0.8}{$\mu_1+\varepsilon$}} -- (4,0.05); 
		\draw [thick] (-.1,-2) node[left]{\scalebox{0.8}{$-\infty$}} -- (.1,-2); 
		\draw [thick] (1.5,0) node[above]{$\cdots$} -- (1.5,0); 
		\draw (.5,-.55) node{\scalebox{0.8}{$\bullet$}};
		\draw (.5,-.95) node{\scalebox{0.8}{$\bullet$}};
		\draw (.5,-1.15) node{\scalebox{0.8}{$\bullet$}};
		\draw (.5,.35) node{\scalebox{0.8}{$\bullet$}};
		\draw (.5,.9) node{\scalebox{0.8}{$\bullet$}};
		\draw (.5,1.4) node{\scalebox{0.8}{$\bullet$}};
	\draw  (1.9,-1.2) node[above]{\scalebox{0.8}{$\Phi_\mu(u_\mu)$}} ; 
	\draw  (4,.6) node[above]{\scalebox{0.8}{$\Phi_\mu(v_\mu)$}} ; 
	\end{tikzpicture}
	\caption{Hypothetical energy curves depending on $\mu$ for \eqref{SP}} \label{fig:M2}
	
\end{figure}

\begin{remark}
Let us consider the Nehari set associated to $\Phi_\mu$, i.e. $\mathcal{N}_{\mu}:=\{u \in X \setminus \{0\}: J_\mu(u)=0\}$, and the manifolds  $\mathcal{N}^{\pm}_\mu:=\{u \in \mathcal{N}_\mu:  J_{\mu}'(u)u \gtrless 0\}$. Here $J_\mu(u):=\Phi_\mu'(u)u$ for $u \in X$.
The solutions $u_{\mu}$ and $v_{\mu}$ of \eqref{CC} are obtained as least energy solutions over $\mathcal{N}_\mu^{-}$ and $\mathcal{N}_\mu^{+}$, respectively. In a similar way, the solutions $u_{\mu}$ and $v_{\mu}$ of \eqref{SP} are  least energy solutions over $\mathcal{N}_\mu^{+}$ and $\mathcal{N}_\mu^{-}$, respectively. It is then expected that the blue curves in Figures \ref{fig:M1} and \ref{fig:M2} lie in $\mathcal{N}_\mu^{-}$, whereas the red ones lie in $\mathcal{N}_\mu^{+}$. It turns out that the zero energy solutions in Theorem \ref{THM1} belong indeed to $\mathcal{N}_{\mu_n}^{-}$ for \eqref{CC}, and to $\mathcal{N}_{\mu_n}^{+}$ for \eqref{SP}, which is consistent with this analysis. 
\end{remark}

 To the best of our knowledge, a systematic study of a functional satisfying condition (H2) was first carried out in \cite{DHI,S1,S2}, where only the global minimum of $\tilde{\Lambda}^{-1}$ was considered. We also refer the reader to \cite{I3}, where the value $\mu_1$ is investigated for a specific elliptic problem.

This article is organized as follows: in section 2 we prove Theorem \ref{THM1}. In section 3 we apply it to two classes of functionals. Finally, section 4 is devoted to applications of our results to several classes of elliptic problems.\\
\subsection*{Acknowledgements} The third author is thankful to Y. Il'yasov, who gave the ideia of applying the Ljusternik-Schnirelman theory to the functional $\Lambda$. \\

\section{Proof of Theorem \ref{THM1}}\strut

Let us collect some properties of $\Lambda$ and $t_0$. Throughout this section we assume that (H1) holds. \begin{lemma}\label{Lambda_0p} \strut
	\begin{itemize}
		\item[i)] $D$ is a symmetric cone and $t_0 \in C^1(D)$ is even. Moreover, $t_0(su)=t_0(u)/s$ for every $u\in D$ and $s>0$.
		\item[ii)] $t_0(u)u \in \mathcal{N}_{\Lambda(u)}^+ \cup \mathcal{N}^-_{\Lambda(u)}$ for every $u\in D$.
		\item[iii)] $\Lambda \in C^1(D)$. Moreover, it is even, $0$-homogeneous and satisfies:
		\begin{equation}\label{pl}
			\Lambda'(u)v=\frac{\Phi_{\Lambda(u)}'(t_0(u)u)(t_0(u)v)}{I_2(t_0(u)u)}, \quad\forall u\in D, \quad\forall v\in X.
		\end{equation}
		In particular, $\Lambda'(u)u=0$ for every $u \in D$, i.e. $D$ is the Nehari set associated to $\Lambda$.
	\end{itemize}
\end{lemma}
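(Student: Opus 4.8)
My plan is to derive all three items from the scaling behaviour of the fibering maps $\psi_u$ together with the identity $\Phi_{\psi_u(t)}(tu)\equiv 0$, treating the items in order.

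For item i), the symmetry and cone structure of $D$ follow from two elementary observations about $\psi_u$. Since $I_1,I_2$ are even, $\mu_0$ is even, whence $\psi_{-u}=\psi_u$; this gives $t_0(-u)=t_0(u)$, so $D$ is symmetric and $t_0$ is even. For the homogeneity I would note that $\psi_{su}(t)=\mu_0(tsu)=\psi_u(st)$ for $s>0$, so $\psi_{su}$ has a (unique, non-degenerate) critical point precisely when $\psi_u$ does, located at $t=t_0(u)/s$; this simultaneously shows that $D$ is a cone and that $t_0(su)=t_0(u)/s$. The only non-routine part is the $C^1$ regularity of $t_0$, which I would get from the implicit function theorem applied to $F(u,t):=\psi'_u(t)$ at $(u,t_0(u))$. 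Hypothesis (H1) supplies the ingredients: $u\mapsto \psi'_u(t)$ is $C^1$, $\psi_u\in C^2$, and the maximum is non-degenerate, so $\partial_t F(u,t_0(u))=\psi''_u(t_0(u))\neq 0$. I expect the main obstacle to lie exactly here, namely in assembling from the two separate regularity clauses of (H1) the \emph{joint} continuous differentiability of $(u,t)\mapsto\psi'_u(t)$ that the implicit function theorem requires.

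For item ii), the key is $\Phi_{\psi_u(t)}(tu)=I_1(tu)-\mu_0(tu)I_2(tu)\equiv 0$. Writing $\Phi_\mu(tu)=I_2(tu)\big(\psi_u(t)-\mu\big)$ and differentiating in $t$ yields $J_{\psi_u(t)}(tu)=t\,\psi'_u(t)\,I_2(tu)$; at $t=t_0(u)$ we have $\psi'_u(t_0(u))=0$, so $J_{\Lambda(u)}(w)=0$ with $w:=t_0(u)u$, i.e. $w\in\mathcal{N}_{\Lambda(u)}$. To place $w$ in $\mathcal{N}^+\cup\mathcal{N}^-$ I would fix $\mu=\Lambda(u)$, set $h(t):=\Phi_\mu(tu)=I_2(tu)(\psi_u(t)-\mu)$, and differentiate $t\mapsto J_\mu(tu)=t\,h'(t)$ once more, obtaining $J'_\mu(w)w=t_0(u)\big(h'(t_0(u))+t_0(u)h''(t_0(u))\big)$. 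Since $\psi_u(t_0(u))=\mu$ and $\psi'_u(t_0(u))=0$, every term in $h'(t_0(u))$ and in $h''(t_0(u))$ carrying a factor $(\psi_u-\mu)$ or $\psi'_u$ collapses, leaving $J'_\mu(w)w=t_0(u)^2\,I_2(w)\,\psi''_u(t_0(u))$. As $I_2(w)\neq 0$ and $\psi''_u(t_0(u))\neq 0$ by non-degeneracy, this is nonzero, so $w\in\mathcal{N}^+_{\Lambda(u)}\cup\mathcal{N}^-_{\Lambda(u)}$.

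For item iii), the regularity and symmetries are immediate: $\Lambda=\mu_0\circ(u\mapsto t_0(u)u)$ is $C^1$ as a composition of $C^1$ maps by item i), it is even since both $t_0$ and $\mu_0$ are, and it is $0$-homogeneous since $\Lambda(su)=\mu_0(t_0(su)su)=\mu_0(t_0(u)u)=\Lambda(u)$. To prove \eqref{pl} I would differentiate $\Lambda(u)=\mu_0(t_0(u)u)$ by the chain rule, giving $\Lambda'(u)v=\big(t_0'(u)v\big)\mu_0'(w)u+t_0(u)\,\mu_0'(w)v$; the crucial simplification is that $\mu_0'(w)u=\psi'_u(t_0(u))=0$, so the first term drops, and substituting $\mu_0'(w)=\Phi'_{\Lambda(u)}(w)/I_2(w)$ yields \eqref{pl}. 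Finally, taking $v=u$ in \eqref{pl} gives $\Lambda'(u)u=J_{\Lambda(u)}(w)/I_2(w)=0$ by item ii) (equivalently, Euler's identity for the $0$-homogeneous $\Lambda$), which is precisely the statement that $D$ is the Nehari set associated to $\Lambda$.
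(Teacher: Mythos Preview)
Your proof is correct and follows essentially the same route as the paper: implicit function theorem for the regularity of $t_0$, the scaling $\psi_{su}(t)=\psi_u(st)$ for the cone property, the identity $J'_{\Lambda(u)}(w)w=t_0(u)^2 I_2(w)\psi''_u(t_0(u))$ for item ii), and the chain rule with the vanishing of $\mu_0'(w)u=\psi'_u(t_0(u))$ for the derivative formula. Your derivation of \eqref{pl} is in fact slightly more streamlined than the paper's, since you invoke the identity $\mu_0'=\Phi'_{\mu_0}/I_2$ from the introduction directly rather than re-expanding the quotient rule.
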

\begin{proof} \strut
	\begin{itemize}
		\item[i)] Let $G:D\times (0,\infty)\to \mathbb{R}$ be given by $G(u,t)=\psi'_u(t)$, so that $G(u,t_0(u))=0$ and $G_t(u,t_0(u))<0$, by (H1). We apply the implicit function theorem to conclude that $t_0 \in C^1(D)$. Now take $u\in D$, $s>0$ and note that $\psi_{su}(t)=\mu_0(t(su))=\psi_{u}(st)$ for any $t>0$, so that $\psi'_{su}(t)=\psi_u'(st)s$, which implies that $t_0(su)=t_0(u)/s$ and $D$ is a cone. In addition,  since $I_1,I_2$ are even, it follows that $\mu_0$ is even, and then $\psi_u(t)=\mu_0(tu)=\mu_0(t(-u))=\psi_{-u}(t)$, for any $t>0$. By the uniqueness condition in (H1) we conclude that $D$ is symmetric, and $t_0(u)=t_0(-u)$. \\
		\item [ii)] Given $u \in D$, from $\psi_u'(t_0(u))=0$ we obtain that
		\begin{eqnarray*}\Phi_{\Lambda(u)}'(t_0(u)u)t_0(u)u&=& t_0(u)\left(I_1'(t_0(u)u)u-\Lambda(u) I_2'(t_0(u)u)u\right)\\&=&t_0(u) I_2(t_0(u)u)\psi_u'(t_0(u))=0.		\end{eqnarray*}
		In addition, one can see that 
		\begin{equation}\label{ej}
		J_{\Lambda(u)}'(t_0(u)u)t_0(u)u=I_2(t_0(u)u)t_0(u)^2\psi_u''(t_0(u))\neq 0.
		\end{equation}\\

		\item[iii)]  First note that since $t_0 \in C^1(D)$ is even it follows that $\Lambda\in C^1(D)$ and it is even. Moreover, $\Lambda(su)=\mu_0(t_0(su)su)=\mu_0(t_0(u)u)=\Lambda(u)$ for any $s>0$ and $u \in D$.
In addition,
		\begin{eqnarray*}
			\Lambda'(u)v&=&\frac{\left(I_2(t_0(u)u)I_1'(t_0(u)u)-I_1(t_0(u)u)I_2'(t_0(u)u)\right)(t_0(u)v+(t'_0(u)v)u)}{I_2(t_0(u)u)^2} \\
			&=&\frac{\left(I_1'(t_0(u)u)-\Lambda(u)I_2'(t_0(u)u)\right)(t_0(u)v+(t'_0(u)v)u)}{I_2(t_0(u)u)}
		\end{eqnarray*}
	for any $u\in D$ and $v \in X$. Since
	\begin{equation*}
		\left(I_1'(t_0(u)u)-\Lambda(u)I_2'(t_0(u)u)\right)(t'_0(u)v)u=t'_0(u)v I_2(t_0(u)u)\psi'_u(t_0(u)) =0,
	\end{equation*}
\eqref{pl} follows from the previous item. Finally, from the previous formula we infer that $\Lambda'(u)u=0$ for any $u \in D$.	

	\end{itemize}
\end{proof}

\begin{proposition}\label{propciritcal} \strut
\begin{enumerate}
\item	  If $u\in D$ and $\Lambda'(u)=0$, then $\Phi_{\Lambda(u)}(t_0(u)u)=0$ and $\Phi'_{\Lambda(u)}(t_0(u)u)=0$. 
\item If $u \neq 0$ is  such that $\Phi_\mu'(u)=0$ and $\Phi_\mu(u)=0$, then $\mu=\Lambda(u)$ and $\Lambda'(u)=0$.
\end{enumerate}
\end{proposition}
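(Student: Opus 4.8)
The plan is to read off both statements from the identity \eqref{pl} in Lemma \ref{Lambda_0p}, combined with the two facts already recorded in the introduction: the characterization $\Phi_\mu(w)=0 \iff \mu=\mu_0(w)$ (valid for $w\neq 0$), and the variational formula $\mu_0'(w)=\Phi'_{\mu_0(w)}(w)/I_2(w)$. The whole proof is then essentially bookkeeping around these two relations, with one genuinely substantive step in item (2).

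For item (1), I would first note that $\Phi_{\Lambda(u)}(t_0(u)u)=0$ holds for \emph{every} $u\in D$, regardless of criticality: since $t_0(u)u\neq 0$ and $\Lambda(u)=\mu_0(t_0(u)u)$ by definition, the equivalence $\Phi_\mu(w)=0\iff \mu=\mu_0(w)$ gives the claim immediately. To obtain $\Phi'_{\Lambda(u)}(t_0(u)u)=0$, I would substitute the hypothesis $\Lambda'(u)=0$ into \eqref{pl}: for every $v\in X$ one gets $0=\Lambda'(u)v=\frac{t_0(u)}{I_2(t_0(u)u)}\,\Phi'_{\Lambda(u)}(t_0(u)u)v$, and since $t_0(u)>0$ and $I_2(t_0(u)u)\neq 0$, the bounded functional $\Phi'_{\Lambda(u)}(t_0(u)u)$ must vanish identically.

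For item (2), the crux is to place $u$ correctly on its own fiber. From $\Phi_\mu(u)=0$ with $u\neq 0$ I read off $\mu=\mu_0(u)=\psi_u(1)$. Differentiating the fiber $\psi_u(t)=\mu_0(tu)$ by the chain rule gives $\psi_u'(1)=\mu_0'(u)u$, and the variational formula at $w=u$ turns this into $\psi_u'(1)=\frac{\Phi'_\mu(u)u}{I_2(u)}=0$, because $\Phi'_\mu(u)=0$ as a functional. Hence $t=1$ is a critical point of $\psi_u$. This is the decisive point, and where (H1) does the real work: $\psi_u$ admits at most one critical point, namely $t_0(u)$, so the existence of a critical point at $t=1$ forces $u\in D$ and $t_0(u)=1$. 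I expect this to be the main (indeed the only nontrivial) obstacle, since it is exactly here that the structural hypothesis on the fibering maps is invoked. Once it is established, $\Lambda(u)=\mu_0(t_0(u)u)=\mu_0(u)=\mu$ follows at once.

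Finally, with $u\in D$, $t_0(u)=1$ and $\Lambda(u)=\mu$ in hand, I would feed these back into \eqref{pl} to conclude: $\Lambda'(u)v=\frac{\Phi'_\mu(u)v}{I_2(u)}=0$ for all $v\in X$, so $\Lambda'(u)=0$. A small point to watch is that the fibering derivative is evaluated at the single scale $t=1$ rather than at the general $t_0(u)$ appearing in Lemma \ref{Lambda_0p}, but this is the same chain-rule computation already performed there, so no new estimate is needed; everything beyond the identification $t_0(u)=1$ is a purely formal consequence of \eqref{pl} and the definition of $\mu_0$.
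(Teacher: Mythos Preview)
Your proposal is correct and follows essentially the same route as the paper: both parts are read off from \eqref{pl}, with the identification $t_0(u)=1$ in item (2) forced by (H1) exactly as you describe. The only cosmetic difference is that in item (1) the paper substitutes $v=w/t_0(u)$ whereas you pull out the scalar $t_0(u)$ by linearity, which is the same computation.
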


\begin{proof} \strut
\begin{enumerate}
\item Let $u\in D$ be a critical point of $\Lambda$. The fact that $\Phi_{\Lambda(u)}(t_0(u)u)=0$ follows from the definition of $t_0(u)$ and $\Lambda$. To prove that $\Phi'_{\Lambda(u)}(t_0(u)u)=0$, note that if $w\in X$ then, by taking $v=w/t_0(u)$, we conclude from Lemma \ref{Lambda_0p} that
		\begin{equation*}
		0=I_2(t_0(u)u)\Lambda'(u)v=\Phi'_{\Lambda(u)}(t_0(u)u)w, 
	\end{equation*}
i.e. $\Phi'_{\Lambda(u)}(t_0(u)u)=0$.\\
\item Let $u \neq 0$ be a critical point of $\Phi_\mu$ such that $\Phi_\mu(u)=0$. Then $\mu=\frac{I_1(u)}{I_2(u)}$, and
$\psi_u'(1)=\frac{\Phi_{\mu}'(u)u}{I_2(u)}=0$, i.e. $t_0(u)=1$ and consequently $\Lambda(u)=\mu$. By Lemma \ref{Lambda_0p} we deduce that $u$ is a critical point of $\Lambda$.
\end{enumerate}
	
\end{proof}

\begin{proposition}\label{propciritcalrestricted}  Any critical point of $\tilde{\Lambda}$ is a critical point of $\Lambda$.
		\end{proposition}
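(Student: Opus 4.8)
The plan is to verify that $S\cap D$ is a \emph{natural constraint} for $\Lambda$, in the sense that the only possible obstruction to criticality on the sphere lies in the radial direction, which $\Lambda$ already annihilates by homogeneity. Since $D$ is open by (H1), the set $S\cap D$ is a relatively open subset of the $C^1$ manifold $S$, so at any point $u\in S\cap D$ its tangent space coincides with $T_uS$. I would begin by fixing a critical point $u$ of $\tilde\Lambda$, which means precisely that $u\in S\cap D$ and that the bounded linear functional $\Lambda'(u)$ vanishes on $T_uS$. Because $\|\cdot\|\in C^1(X\setminus\{0\})$, this tangent space is the closed hyperplane $T_uS=\ker\big((\|\cdot\|)'(u)\big)$.

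The heart of the argument is to record the two Euler relations produced by homogeneity and to check that they pull in transverse directions. Since the norm is $1$-homogeneous, differentiating $\|su\|=s\|u\|$ in $s$ at $s=1$ gives $(\|\cdot\|)'(u)u=\|u\|=1\neq0$; hence $u\notin T_uS$, and consequently $X=T_uS\oplus\mathbb{R}u$. On the other hand, Lemma \ref{Lambda_0p}(iii) asserts that $\Lambda$ is $0$-homogeneous and, in particular, that $\Lambda'(u)u=0$ for every $u\in D$.

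I would then combine these facts by linearity. Writing an arbitrary $w\in X$ in the decomposition $w=v+tu$ with $v\in T_uS$ and $t\in\mathbb{R}$, the criticality of $\tilde\Lambda$ gives $\Lambda'(u)v=0$, while the homogeneity relation gives $\Lambda'(u)u=0$, so that $\Lambda'(u)w=\Lambda'(u)v+t\,\Lambda'(u)u=0$. As $w$ was arbitrary, $\Lambda'(u)=0$, i.e. $u$ is a critical point of $\Lambda$, which is exactly the claim.

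I do not expect a serious obstacle here: this is the standard natural-constraint mechanism, and the computation is essentially linear algebra once the two Euler relations are in place. The only point that genuinely requires care is confirming that the radial direction is transverse to the sphere, namely that $u\notin T_uS$, so that $T_uS$ and $\mathbb{R}u$ together span all of $X$; this is what lets a tangential vanishing plus a radial vanishing add up to global vanishing of $\Lambda'(u)$. That transversality rests entirely on the $C^1$-regularity and $1$-homogeneity of the norm, both of which are part of the standing hypotheses on $X$.
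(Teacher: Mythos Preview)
Your proof is correct and follows essentially the same route as the paper's: both use the decomposition $X=T_uS\oplus\mathbb{R}u$, invoke $\Lambda'(u)u=0$ from Lemma \ref{Lambda_0p}(iii), and conclude that tangential vanishing of $\Lambda'(u)$ forces global vanishing. Your version is in fact slightly more careful, since you explicitly justify the transversality $u\notin T_uS$ via the Euler relation $(\|\cdot\|)'(u)u=\|u\|=1$, whereas the paper simply asserts that any $w$ decomposes as $v+tu$.
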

		
\begin{proof} Let $u \in S \cap D$ with $\tilde{\Lambda}'(u)=0$. From Lemma \ref{Lambda_0p} we know that $\Lambda'(u)u=0$. Moreover, since $D$ is an open set,  we have  $\tilde{\Lambda}'(u)=\Lambda'(u)_{|\mathcal{T}_{S\cap D}(u)}$, where $\mathcal{T}_{S\cap D}(u)$ is the tangent space to $S\cap D$ at $u$. If $w\in X$, then $w=v+tu$ for some $v\in \mathcal{T}_{S\cap D}(u)$ and $t\in \mathbb{R}$, which implies that $\Lambda'(u)w=\tilde{\Lambda}'(u)v+\Lambda'(u)tu=0$.\\
\end{proof}

We are now ready to prove Theorem \ref{THM1}.\\
\begin{proof}[{\bf Proof of Theorem \ref{THM1}}] \strut	
\begin{enumerate}
\item By Proposition \ref{propciritcal} (ii) we know that if $u \neq 0$ satisfies $\Phi_\mu(u)=\Phi'_\mu(u)=0$ then $\mu=\Lambda(u)$, so that $ \inf_D \Lambda \leq \mu \leq \sup_D \Lambda$, which yields the desired conclusion.\\

\item   First we assume that $\Lambda$ is bounded from below. Since $\gamma(D\cap S)=\infty$ and $\tilde{\Lambda}$ satisfies the Palais-Smale condition at every $\mu_n$,
by the Ljusternick-Schnirelman theorem (see e.g. \cite[Corollary 4.17]{G}) there exists a sequence $(u_{n})\subset S$ such that $\tilde{\Lambda}'(u_n)=0$ and $\tilde{\Lambda}(u_n)=\mu_n$. From the previous propositions, we have that
$$\frac{\partial \Lambda}{\partial u}(c,\pm u_{n,c})=0\quad \text{and}\quad
\Lambda(c,\pm u_{n,c}) = \mu_{n,c} \quad \forall n\in \mathbb{N}.$$
 From  Proposition \ref{propciritcal} and the fact that $u \mapsto t(c,u)$ is even, 
the sequence given by
$$v_{n}:=t(c,u_{n,c})u_{n,c}$$
satisfies
$$\Phi_{\mu_{n,c}}( \pm v_{n,c})= c\quad\text{and}\quad
\Phi'_{\mu_{n,c}}(\pm v_{n,c}) = 0 \quad \forall n\in \mathbb N.$$
Now, if $\Lambda$ is bounded from above, then we deal with the functional $-\Lambda$, which is bounded from below. Moreover,  since $ \tilde{\Lambda}'=-\widetilde{(-\Lambda)}'$  we see that $\widetilde{-\Lambda}=-\tilde{\Lambda}$  satisfies the Palais-Smale condition at the level $\mu$ if and only if $\tilde{\Lambda}$ does so at the level $-\mu$.
Thus  $\displaystyle \inf_{F\in \mathcal{F}_n}\sup_{u\in F} (-\tilde{\Lambda}(u))$ is a critical value of $ -\tilde{\Lambda}$, so that $\displaystyle \sup_{F\in \mathcal{F}_n}\inf_{u\in F}\tilde{\Lambda}(u)=-\displaystyle \inf_{F\in \mathcal{F}_n}\sup_{u\in F} (-\tilde{\Lambda}(u))$ is a critical value of $\tilde{\Lambda}$. 
\end{enumerate}		 
\end{proof}
\section{Some classes of functionals}\strut

Let us apply Theorem \ref{THM1} to two classes of functionals. In the first one $\Lambda$ is bounded from below, whereas for the second one it is bounded from above.

\subsection{A first class of functionals}
Since $\|\cdot \| \in C^1(X \setminus \{0\})$ and $D$ is an open set, we know that $S\cap D=\{u\in D:\ \|u\|=1\}$ is a $C^1$ manifold and $\mathcal{T}_{S\cap D}(u)=\{v \in X: i'(u)v=0\}$, where $i(u)=\frac{1}{2}\|u\|^2$. We consider the functional
\begin{equation*}
\Phi_\mu(u):=\frac{1}{\eta}N(u) -\frac{\mu}{\alpha}A(u)-\frac{1}{\beta}B(u)
\end{equation*}
where  $1<\alpha<\eta<\beta$, and $N,A,B \in C^1(X)$ are even functionals satisfying the following conditions:
\begin{enumerate}
\item $N,A,B$ are  $\eta$-homogeneous, $\alpha$-homogeneous and $\beta$-homogeneous, respectively.
\item There exists $C>0$ such that $ N(u)\geq C^{-1}\|u\|^{\eta}$, $|A(u)| \leq C\|u\|^\alpha$ and $|B(u)|\leq C\|u\|^{\beta}$ for all $u \in X$. 
\item $A'$ and $B'$ are completely continuous, i.e. $A'(u_n) \to A'(u)$ and $B'(u_n) \to B'(u)$ in $X^*$ if $u_n \rightharpoonup u$ in $X$.
\item $A(u)>0$ for any $u \neq 0$ and $\gamma(B^+)=\infty$, where $B^+:=\{u \in X: B(u)>0\}$.
\item $N$ is weakly lower semicontinuous and there exists $C>0$ such that 
$$(N'(u)-N'(v))(u-v)\geq C(\|u\|^{\eta-1}-\|v\|^{\eta-1})(\|u\|-\|v\|)$$ for any $u,v \in X$.\\
\end{enumerate}

We set 
\begin{equation} \label{fn} \mathcal{F}_n=\{F\subset S\cap B^+: F \mbox{ is compact, symmetric, and } \gamma(F)\ge n\}\end{equation} and
\begin{equation*}
	\mu_n:=C_{\alpha,\beta,\eta} \displaystyle
	\inf_{F\in \mathcal{F}_n}\sup_{u\in F}  \frac{N(u)^{\frac{\beta-\alpha}{\beta-\eta}}}{A(u)B(u)^{\frac{\eta-\alpha}{\beta-\eta}}},
\end{equation*}
where  $$C_{\alpha,\beta,\eta}:=\frac{\alpha}{\eta} \frac{\beta-\eta}{\beta-\alpha} \left(\frac{\beta}{\eta}\frac{\eta-\alpha}{\beta-\alpha}\right)^{\frac{\eta-\alpha}{\beta-\alpha}}.$$
In particular, note that $$\mu_1=C_{\alpha,\beta,\eta} \inf_{B^+} \frac{N(u)^{\frac{\beta-\alpha}{\beta-\eta}}}{A(u)B(u)^{\frac{\eta-\alpha}{\beta-\eta}}}.$$

We obtain the following result:

\begin{theorem}\label{CCStructureExistence}
Under the above conditions the following properties hold:
\begin{enumerate}
\item $(\mu_n) \subset (0,\infty)$, $(\mu_n)$ is nondecreasing and $\mu_n \to \infty$ as $n \to \infty$.
\item There is no $u \in X \setminus \{0\}$ such that $\Phi_\mu'(u)=0$ and $\Phi_\mu(u)=0$ for $\mu< \mu_1$. 
\item There exists a sequence $(u_n)\subset B^+$ such that $$\Phi_{\mu_n}(\pm u_n)=0 \quad \mbox{and} \quad \Phi'_{\mu_n}(\pm u_n)=0, \quad \forall n \in \mathbb{N}.$$ Moreover,  $u_n \in \mathcal{N}_{\mu_n}^-$ for every $n \in \mathbb{N}$.
\end{enumerate}
\end{theorem}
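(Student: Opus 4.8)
The plan is to place $\Phi_\mu$ inside the abstract framework and then read the three assertions off Theorem~\ref{THM1}. First I would take $I_1=\frac1\eta N-\frac1\beta B$ and $I_2=\frac1\alpha A$, so that $I_2(u)\neq0$ for $u\neq0$ by (4). The homogeneities in (1) make the fibering map explicit:
\[
\psi_u(t)=\mu_0(tu)=\frac{\alpha}{A(u)}\left(\frac{t^{\eta-\alpha}}{\eta}N(u)-\frac{t^{\beta-\alpha}}{\beta}B(u)\right).
\]
Since $1<\alpha<\eta<\beta$ and $N(u)>0$ by (2), the map $\psi_u$ has a positive critical point if and only if $B(u)>0$, and there it is a nondegenerate global maximum because $\psi_u(0^+)=0$ and $\psi_u(+\infty)=-\infty$. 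Hence $D=B^+$, which is open by continuity of $B$ and satisfies $\gamma(D\cap S)=\infty$ by (4). Solving $\psi_u'(t_0)=0$ and substituting back gives the closed form $\Lambda(u)=C_{\alpha,\beta,\eta}\,N(u)^{p}A(u)^{-1}B(u)^{-q}$ with $p=\frac{\beta-\alpha}{\beta-\eta}$ and $q=\frac{\eta-\alpha}{\beta-\eta}$, which matches the definition of $\mu_n$ and confirms (H1).

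Next I would settle the boundedness and (H2). On $S$ the bounds in (2) give $N\geq C^{-1}$ and $0<A,B\leq C$, whence $\Lambda\geq C_{\alpha,\beta,\eta}\,C^{-p-1-q}>0$; thus $\Lambda$ is bounded below with $\mu_1=\inf_D\Lambda>0$, which is (H2) and already yields the positivity in assertion (1) and the nonexistence in assertion (2) via Theorem~\ref{THM1}(1). Monotonicity of $(\mu_n)$ is the usual consequence of $\mathcal F_{n+1}\subset\mathcal F_n$.

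The heart of the argument is the Palais–Smale condition for $\tilde\Lambda$. Given $(u_n)\subset S\cap D$ with $\Lambda(u_n)$ bounded and $\tilde\Lambda'(u_n)\to0$, the $0$-homogeneity of $\Lambda$ makes the Lagrange multiplier vanish ($\Lambda'(u)u=0$), so $\Lambda'(u_n)\to0$ in $X^*$. Passing to $u_n\rightharpoonup u$, the bound on $\Lambda(u_n)$ confines $B(u_n)$ and $N(u_n)$ to a compact subinterval of $(0,\infty)$ — in particular $B(u_n)\not\to0$ — and the complete continuity in (3) annihilates the $A',B'$ terms in $\Lambda'(u_n)(u_n-u)\to0$, leaving $N'(u_n)(u_n-u)\to0$. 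The monotonicity estimate in (5) is of $(S_+)$ type: together with $u_n-u\rightharpoonup0$ it forces $\|u_n\|\to\|u\|=1$, and the Radon–Riesz property of the uniformly convex space $X$ upgrades weak to strong convergence, with $B(u)>0$ placing the limit in $S\cap D$. Guaranteeing that the limit neither loses norm nor drifts to $\partial B^+$ is the main obstacle, and it is exactly here that (2), (3), (5) and uniform convexity are all used.

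With (H1), (H2), $\gamma(D\cap S)=\infty$ and PS established, Theorem~\ref{THM1}(2) produces pairs $\pm u_n$ with $\Phi_{\mu_n}(\pm u_n)=\Phi'_{\mu_n}(\pm u_n)=0$, which is assertion (3). For the Nehari membership I would use \eqref{ej}: at $t_0(u)u$ one has $J_{\Lambda(u)}'(t_0(u)u)(t_0(u)u)=I_2(t_0(u)u)\,t_0(u)^2\,\psi_u''(t_0(u))<0$, since $A>0$ and $t_0(u)$ is a nondegenerate maximum, so $u_n\in\mathcal N_{\mu_n}^-$. Finally $\mu_n\to\infty$ follows from the standard Ljusternik–Schnirelman fact that the min-max levels of a functional bounded below and satisfying PS at every level converge to its supremum, together with $\sup_{S\cap D}\Lambda=+\infty$ — which holds because approaching $\partial B^+$ along $S$ sends $B\to0^+$ while $N\geq C^{-1}$, so $\Lambda\to+\infty$.
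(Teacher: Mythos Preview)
Your verification of (H1), (H2), the Palais--Smale condition, and the Nehari membership is essentially the paper's own argument: same choice of $I_1,I_2$, same explicit $\psi_u$, $t_0$, $\Lambda$, same use of (3) and (5) to upgrade weak to strong convergence via uniform convexity, and the same appeal to \eqref{ej} with $I_2>0$ for $u_n\in\mathcal N_{\mu_n}^-$.

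The one real discrepancy is your argument for $\mu_n\to\infty$. You invoke ``the standard Ljusternik--Schnirelman fact that the min-max levels of a functional bounded below and satisfying PS at every level converge to its supremum''. This is not standard in the generality you state: PS alone does not force the genus-$n$ levels to exhaust the range of the functional on a noncompact manifold, and showing $\sup_{S\cap D}\Lambda=+\infty$ by approaching $\partial B^+$ is in any case not enough. The paper does \emph{not} proceed this way. Instead it isolates a separate lemma (Lemma~\ref{lsi} in the appendix): if $L(u_n)\to\infty$ whenever $u_n\rightharpoonup 0$ in $X$, then the min-max levels $l_n\to\infty$. The proof of that lemma uses finite-rank odd projections $P_n$ to show that any sublevel set $\{L\le M\}$ is mapped into $X_{n_0}\setminus\{0\}$ by some $P_{n_0}$, hence has finite genus; this contradicts the assumption that sets of arbitrarily large genus sit inside it. The hypothesis of the lemma is then checked directly: on $S$, $u_n\rightharpoonup 0$ forces $A(u_n),B(u_n)\to 0$ by (3) while $N(u_n)\ge C^{-1}$ by (2), so $\Lambda(u_n)\to\infty$. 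You should replace your appeal to a ``standard fact'' by this weak-to-infinity condition, which is the actual mechanism.
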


\begin{proof} It is clear that $(\mu_n)$ is nondecreasing.  First we apply Theorem \ref{THM1} to prove the second and third items.
For any $u \in X \setminus \{0\}$ we have $$\mu_0(u)=\frac{\alpha}{\eta}\frac{N(u)}{A(u)}-\frac{\alpha}{\beta}\frac{B(u)}{A(u)},$$
so that $$\psi_u(t)=\frac{\alpha}{\eta}\frac{N(u)}{A(u)}t^{\eta-\alpha}-\frac{\alpha}{\beta}\frac{B(u)}{A(u)}t^{\beta-\alpha}.$$
It follows that (H1)  holds with $D=B^+$. Indeed, $\psi_u$ has a critical point $t_0(u)>0$ if and only if $u \in D$. In this case $$t_0(u)=\left(\frac{\beta}{\eta} \frac{\eta-\alpha}{\beta -\alpha}\frac{N(u)}{B(u)}\right)^{\frac{1}{\beta-\eta}},$$
which is a nondegenerate global maximum point of $\psi_u$.
Thus $$\Lambda(u)=C_{\alpha,\beta,\eta} \frac{N(u)^{\frac{\beta-\alpha}{\beta-\eta}}}{A(u)B(u)^{\frac{\eta-\alpha}{\beta-\eta}}}.$$
We see that $\Lambda \in C^1(D)$ with
\begin{equation}\label{ld}
\Lambda'(u)v=C_{\alpha,\beta,\eta}Q(u) \left(\frac{\beta-\alpha}{\beta-\eta}A(u)B(u)N'(u)v-N(u)B(u)A'(u)v-\frac{\eta-\alpha}{\beta-\eta}N(u)A(u)B'(u)v\right),
\end{equation}
where 
\begin{equation}
\label{q} Q(u)=\frac{N(u)^{\frac{\eta-\alpha}{\beta-\eta}} B(u)^{\frac{\alpha-\beta}{\beta-\eta}}}{A(u)^2},
\end{equation}
for any $u\in D$ and $v \in X$. In particular, $\tilde{\Lambda} \in C^1(S\cap D)$.
In addition, it follows from (2) that $\Lambda(u)\geq C$ for some $C>0$ and any $u \in  D$. Thus $\mu_1>0$.

Let us show that $\tilde{\Lambda}$ satisfies the Palais-Smale condition. We pick $(u_n) \subset S\cap D$  such that $(\Lambda(u_n))$ is bounded and $\tilde{\Lambda}'(u_n)\to 0$, i.e. $\left|\Lambda'(u_n)v\right| \leq \epsilon_n \|v\|$ for any $v \in \mathcal{T}_{S\cap D}(u_n)$, with $\epsilon_n \to 0$.
 Since $(u_n)$ is bounded, we have $u_n\rightharpoonup u$ in $X$ up to a subsequence. By (3) we know that $A,B$ are weakly continuous, i.e. $A(u_n) \to A(u)$ and $B(u_n) \to B(u)$. Since $N(u_n)$ is bounded away from zero,  it follows that $u\neq 0$, $A(u)>0$ and $B(u)> 0$, i.e. $u \in D$.  Now observe that
for any $w\in X$ and any $n$, there exist an unique pair $(t_n,v_n)\in \mathbb{R}\times \mathcal{T}_{S\cap D}(u_n)$ such that $w=v_n+t_nu_n$. Hence $i'(u_n)w=t_ni'(u_n)u_n=t_n$, so $(t_n)$ is bounded, and consequently $(v_n)$ is bounded as well. Therefore $\Lambda'(u_n)v_n\to 0$ and since $\Lambda'(u_n)u_n=0$, we conclude that $\Lambda'(u_n)w\to 0$ for any $w \in X$. Taking $w=u_n-u$ in \eqref{ld} and using the fact that $Q(u_n)$ is away from zero, $A'(u_n)(u_n-u)\to 0$, and $B'(u_n)(u_n-u) \to 0$,  we infer that $N'(u_n)(u_n-u) \to 0$. Hence
$(N'(u_n)-N'(u))(u_n-u) \to 0$, and (5) implies that $\|u_n\| \to \|u\|$, so that by the uniform convexity of $X$ we deduce that $u_n\to u$ in $X$. Since $u \in S \cap D$, we obtain the desired conclusion and thus we can apply Theorem \ref{THM1}. 
Lastly, since $I_2(u)=\alpha^{-1}A(u)>0$ for every $u \neq 0$, we infer from \eqref{ej} that $u_n \in \mathcal{N}_{\mu_n}^-$ for every $n$.

Finally,  $\mu_n \ge \mu_1>0$ for every $n$. We employ Lemma \ref{lsi} to show that $\mu_n \to \infty$. Let $(u_n) \subset S$ with $u_n\rightharpoonup 0$ in $X$. Then $N(u_n)$ is away from zero, whereas $A(u_n),B(u_n) \to 0$, so that $\Lambda(u_n) \to \infty$, which yields the conclusion.
 \end{proof}
	
\subsection{A second class of functionals} \label{Sectionsecondclass}
We consider now the functional
\begin{equation*}
\Phi_\mu(u):=\frac{1}{\eta}N(u) +\frac{\mu}{\alpha}A(u)-\frac{1}{\beta}B(u)
\end{equation*}
where 
$1<\eta<\beta<\alpha$, and $N,A,B \in C^1(X)$ are even functionals satisfying:
\begin{enumerate}
	\item $N,A,B$ are  $\eta$-homogeneous, $\alpha$-homogeneous and $\beta$-homogeneous, respectively.
	\item There exists $C>0$ such that $ N(u)\geq C^{-1}\|u\|^{\eta}$ and $|B(u)|\leq C\|u\|^{\beta}$ for all $u \in X$. 
	\item $A'$ and $B'$ are completely continuous, i.e. $A'(u_n) \to A'(u)$ and $B'(u_n) \to B'(u)$ in $X^*$ if $u_n \rightharpoonup u$ in $X$.
	\item $A(u)>0$ for any $u \neq 0$ and $\gamma(B^+)=\infty$, where $B^+:=\{u \in X: B(u)>0\}$.
	\item $N$ is weakly lower semicontinuous and there exists $C>0$ such that 
	$$(N'(u)-N'(v))(u-v)\geq C(\|u\|^{\eta-1}-\|v\|^{\eta-1})(\|u\|-\|v\|)$$ for any $u,v \in X$.\\
\end{enumerate}
In addition, we shall assume that
\begin{enumerate}
\item [(6)] The set $\{u \in X: \Phi_{\mu}'(u)u=\Phi_\mu(u)=0\}$ is uniformly bounded for $\mu \in [a,b] \subset (0,\infty)$.\\
\end{enumerate}
We set now
\begin{equation*}
	\mu_n:=D_{\alpha,\beta,\eta} \displaystyle
	\sup_{F\in \mathcal{F}_n}\inf_{u\in F}  \frac{B(u)^{\frac{\alpha-\eta}{\beta-\eta}}}{A(u)N(u)^{\frac{\alpha-\beta}{\beta-\eta}}},
\end{equation*}
where  $D_{\alpha,\beta,\eta}:=\frac{\alpha}{\beta} \frac{\beta-\eta}{\alpha-\eta} \left(\frac{\eta}{\beta}\frac{\alpha-\beta}{\alpha-\eta}\right)^{\frac{\alpha-\beta}{\beta-\eta}}$ and $\mathcal{F}_n$ are given by \eqref{fn}. We have now $$\mu_1=D_{\alpha,\beta,\eta} \sup_{u \in B^+}\frac{B(u)^{\frac{\alpha-\eta}{\beta-\eta}}}{A(u)N(u)^{\frac{\alpha-\beta}{\beta-\eta}}}.$$

\begin{theorem}\label{APPCC}
Under the above conditions the following properties hold:
\begin{enumerate}
\item $(\mu_n) \subset (0,\infty)$, $(\mu_n)$ is nonincreasing and $\mu_n \to 0$ as $n \to \infty$.
\item There is no $u \in X \setminus \{0\}$ such that $\Phi_\mu'(u)=0$ and $\Phi_\mu(u)=0$ for $\mu>\mu_1$. 
\item There exists a sequence $(u_n)\subset B^+$ such that $$\Phi_{\mu_n}(\pm u_n)=0 \quad \mbox{and} \quad \Phi'_{\mu_n}(\pm u_n)=0, \quad \forall n \in \mathbb{N}.$$ Moreover,  $u_n \in \mathcal{N}_{\mu_n}^+$ for every $n \in \mathbb{N}$.
\end{enumerate}
\end{theorem}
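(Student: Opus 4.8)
The plan is to reproduce the scheme of the proof of Theorem \ref{CCStructureExistence}, now in the regime where (H2) holds for $\tilde{\Lambda}^{-1}$ (so that $\Lambda$ plays the role of a functional bounded from above), and to apply Theorem \ref{THM1} in its second alternative. First I would verify (H1). A direct computation gives $\mu_0(u)=\frac{\alpha}{\beta}\frac{B(u)}{A(u)}-\frac{\alpha}{\eta}\frac{N(u)}{A(u)}$, so that by homogeneity $\psi_u(t)=\frac{\alpha}{\beta}\frac{B(u)}{A(u)}t^{\beta-\alpha}-\frac{\alpha}{\eta}\frac{N(u)}{A(u)}t^{\eta-\alpha}$. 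Since $\eta-\alpha<\beta-\alpha<0$, one checks that $\psi_u$ has a unique, nondegenerate critical point $t_0(u)>0$, which is a global maximum, exactly when $u\in B^+$; thus $D=B^+$, $\gamma(S\cap D)=\infty$, and one obtains the closed forms $t_0(u)^{\beta-\eta}=\frac{\beta}{\eta}\frac{\alpha-\eta}{\alpha-\beta}\frac{N(u)}{B(u)}$ and $\Lambda(u)=D_{\alpha,\beta,\eta}\frac{B(u)^{(\alpha-\eta)/(\beta-\eta)}}{A(u)N(u)^{(\alpha-\beta)/(\beta-\eta)}}$. As in the first class, the explicit formula shows $\Lambda\in C^1(D)$, hence $\tilde{\Lambda}\in C^1(S\cap D)$, with a derivative analogous to \eqref{ld}.

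The heart of the argument --- and the step I expect to be the main obstacle --- is establishing (H2) and the Palais--Smale condition for $\tilde{\Lambda}$ at each level $\mu_n$. The compactness mechanism is the same as before: given $(u_k)\subset S\cap D$ with $(\Lambda(u_k))$ bounded and $\tilde{\Lambda}'(u_k)\to 0$, extract $u_k\rightharpoonup u$; the complete continuity in (3) makes $A,B$ weakly continuous, and then (5) together with the uniform convexity of $X$ upgrades weak to strong convergence, provided one knows that the weak limit $u$ still lies in $D$, i.e. $B(u)>0$. In the first class this was forced by the bound $|A(u)|\le C\|u\|^\alpha$ in (2); that bound is absent here, and this is exactly where hypothesis (6) enters. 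The point is that for every $w\in D$ the element $t_0(w)w$ is a zero--energy point on the Nehari set of $\Phi_{\Lambda(w)}$ (Lemma \ref{Lambda_0p}(ii) and the definition of $t_0$); since along the sequence $\Lambda(u_k)$ stays in a compact subinterval $[a,b]\subset(0,\infty)$, (6) bounds $\|t_0(u_k)u_k\|=t_0(u_k)$, and via $t_0(u_k)^{\beta-\eta}\propto N(u_k)/B(u_k)$ together with $N(u_k)\ge C^{-1}$ this yields $B(u_k)\ge c>0$, whence $B(u)>0$ and $u\in D$. The same device controls $\tilde{\Lambda}^{-1}$ on the relevant levels and gives (H2); I expect the finiteness of $\mu_1=\sup_{B^+}\Lambda$ to be the most delicate point and to rest on (6) in this form.

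With (H1), (H2), $\gamma(S\cap D)=\infty$ and the Palais--Smale condition in hand, I would apply Theorem \ref{THM1} in the branch where $\Lambda$ is bounded from above, obtaining for each $n$ a pair $\pm u_n\in S\cap B^+$ with $\tilde{\Lambda}'(u_n)=0$ and $\tilde{\Lambda}(u_n)=\mu_n$; Propositions \ref{propciritcal}--\ref{propciritcalrestricted} then turn $v_n=t_0(u_n)u_n$ into the desired zero--energy critical points of $\Phi_{\mu_n}$, proving item (3), while item (2) is Theorem \ref{THM1}(1). That $(\mu_n)$ is nonincreasing is immediate from the monotonicity of the genus and the $\sup$--$\inf$ definition. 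For the sign condition $u_n\in\mathcal{N}^+_{\mu_n}$ I would use \eqref{ej}: here $I_2=-\tfrac{1}{\alpha}A$ is negative on $D$, so $J'_{\mu_n}(v_n)v_n=I_2(v_n)\,t_0(u_n)^2\,\psi''_{u_n}(t_0(u_n))$ is a product of a negative, a positive and a negative factor (the last since $t_0$ is a nondegenerate maximum), hence positive, placing $v_n$ in $\mathcal{N}^+_{\mu_n}$. Finally, $\mu_n>0$ for every $n$ because any compact symmetric $F\subset S\cap B^+$ of genus $\ge n$ (available since $\gamma(B^+)=\infty$) has $\inf_F\Lambda>0$ by continuity and positivity of $\Lambda$; and $\mu_n\to 0$ follows from the dual form of the minimax--limit argument of the first class (Lemma \ref{lsi} applied to $\tilde{\Lambda}^{-1}$, which is unbounded above since $\Lambda\to 0$ as $B\to 0^+$), so that the corresponding levels tend to $+\infty$ and their reciprocals $\mu_n$ tend to $0$.
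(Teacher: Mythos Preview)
Your proposal is correct and follows the same route as the paper: compute $\psi_u$, $t_0$, $\Lambda$ explicitly on $D=B^+$, use hypothesis (6) to bound $t_0$ along the relevant sequences (hence keep $B$ away from zero) for both the Palais--Smale condition and for (H2), then invoke Theorem \ref{THM1} and read off $u_n\in\mathcal N_{\mu_n}^+$ from \eqref{ej} with $I_2=-\alpha^{-1}A<0$.

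One step needs sharpening. Your justification for the hypothesis of Lemma \ref{lsi} applied to $\tilde\Lambda^{-1}$, namely ``$\Lambda\to 0$ as $B\to 0^+$'', is not sufficient on its own: along $(u_k)\subset S$ with $u_k\rightharpoonup 0$ one has $B(u_k)\to 0$ \emph{and} $A(u_k)\to 0$ (by weak continuity of $A$, which follows from (3) and homogeneity), so $\Lambda(u_k)$ is an indeterminate form. The paper resolves this exactly with the device you already used for Palais--Smale: if $\Lambda(u_k)\not\to 0$ then $\Lambda(u_k)$ stays in some $[a,b]\subset(0,\infty)$, so (6) bounds $t_0(u_k)$, contradicting $t_0(u_k)^{\beta-\eta}=\tfrac{\beta}{\eta}\tfrac{\alpha-\eta}{\alpha-\beta}\,N(u_k)/B(u_k)\to\infty$. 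The same contradiction argument (with $\Lambda(u_k)\to\infty$ forcing $A(u_k)\to 0$, hence $u_k\rightharpoonup 0$, hence $B(u_k)\to 0$) is what the paper uses to prove $\mu_1<\infty$, which you correctly flagged as the delicate point.
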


\begin{proof} First of all, it is clear that $(\mu_n)$ is nonincreasing and $\mu_n>0$ for every $n$.
We have now $$\mu_0(u)=\frac{\alpha}{\beta}\frac{B(u)}{A(u)}-\frac{\alpha}{\eta}\frac{N(u)}{A(u)},$$
so that $$\psi_u(t)=\frac{\alpha}{\beta}\frac{B(u)}{A(u)}t^{\beta-\alpha}-\frac{\alpha}{\eta}\frac{N(u)}{A(u)}t^{\eta-\alpha}$$
has a unique critical point $$t_0(u)=\left(\frac{\beta}{\eta} \frac{\alpha-\eta}{ \alpha-\beta}\frac{N(u)}{B(u)}\right)^{\frac{1}{\beta-\eta}}$$
for any $u \in B^+$.
Thus $$\Lambda(u)=D_{\alpha,\beta,\eta} \frac{B(u)^{\frac{\alpha-\eta}{\beta-\eta}}}{A(u)N(u)^{\frac{\alpha-\beta}{\beta-\eta}}}.$$
We see that $\Lambda \in C^1(B^+)$ with
$$
\Lambda'(u)v=D_{\alpha,\beta,\eta}Q(u) \left(\frac{\alpha-\eta}{\beta-\eta}A(u)N(u)B'(u)v-N(u)B(u)A'(u)v-\frac{\alpha-\beta}{\beta-\eta}B(u)A(u)N'(u)v\right),$$
where $Q$ is given by \eqref{q},
for any $u \in B^+$ and $v \in X$. 

 We claim that $\Lambda$ 
 is bounded from above. Indeed, suppose by contradiction that there exists $\{u_n\}\subset S$ such that $\Lambda(u_n)\to \infty$. From the expression of $\Lambda$ and since $N(u_n)$ is away from zero, we deduce that $A(u_n) \to 0$, i.e. $u_n \rightharpoonup 0$ in $X$, so that $B(u_n) \to 0$. It follows that $t_0(u_n) \to \infty$. On the other hand, since \begin{equation}\label{eto}\Phi_{\Lambda(u_n)}'(t_0(u_n)u_n)t_0(u_n)u_n=\Phi_{\Lambda(u_n)}(t_0(u_n)u_n)=0,\end{equation} and $\Lambda(u_n) \to \infty$, by condition (6) we infer that $(t_0(u_n))$ is bounded, and we reach a contradiction. Thus $\Lambda$ is bounded from above.
 
Let us show that $\tilde{\Lambda}$ satisfies the Palais-Smale condition at $\mu_n$ for every $n$. Since $\mu_n>0$ for every $n$, it suffices to show the Palais-Smale condition at $\mu>0$.
 If $(u_n) \subset S\cap D$  is  a PS sequence at the level $\mu>0$, then we can assume that $u_n \rightharpoonup u$ in $X$. We use then (7) to prove that $u \in B^+$. Indeed, otherwise by (2) we would have $t_0(u_n)\to \infty$, and we obtain a contradiction as in the previous argument. We argue then as in the proof of Theorem \ref{CCStructureExistence}, to show that $\tilde{\Lambda}$ satisfies the Palais-Smale condition. From Theorem \ref{THM1} the existence part is complete. Lastly, since $I_2(u)=-\alpha^{-1}A(u)<0$ for every $u \neq 0$, we infer from \eqref{ej} that $u_n \in \mathcal{N}_{\mu_n}^+$ for every $n$.

Finally we prove that $\mu_n \to 0$ i.e. $\mu_n^{-1} \to \infty$.  Note that $\mu_n^{-1}=\displaystyle \inf_{F\in \mathcal{F}_n}\sup_{u\in F}(\tilde{\Lambda})^{-1}(u)$. Let $(u_n) \subset S$ with $u_n\rightharpoonup 0$ in $X$, and assume that $(\tilde{\Lambda})^{-1}(u_n)\not \to \infty$, i.e. $\Lambda(u_n) \not \to 0$. From \eqref{eto} and condition (6) we deduce that $(t_0(u_n))$ is bounded.
 On the other hand, the expression of $t_0$ shows that $t_0(u_n)\to \infty$, and we reach a contradiction. Therefore $(\tilde{\Lambda})^{-1}(u_n) \to \infty$ and Lemma \ref{lsi} yields that $\mu_n^{-1} \to \infty$, as desired.
\end{proof}

\begin{remark}\label{r1}\strut
\begin{enumerate}
\item The previous result also holds if instead of assuming that $A'$ is completely continuous, we assume that $A=N^{\sigma}$ for some $\sigma>0$. As a matter of fact, repeating the argument in the proof of Theorem \ref{CCStructureExistence} we still find that $(N'(u_n)-N'(u))(u_n-u) \to 0$. 
\item Even though $\Lambda$ is bounded from below in the previous proof, we can not exclude that the levels $\displaystyle \inf_{F\in \mathcal{F}_n}\sup_{u\in F}  \Lambda(u)$ are nonzero. Even more, in some cases as the Kirchhoff problem \eqref{sk} below one can show that these levels are indeed equal to zero for every $n$. Moreover, in this case $\inf \Lambda=0$, so that $\Lambda$ does not satisfy the Palais-Smale condition at the level zero.
\end{enumerate}
\end{remark}
\section{Applications}\strut

Next we apply the previous results to several classes of elliptic problems. In the sequel $\mu$ is a real parameter, $\Omega\subset \mathbb{R}^N$ is a bounded domain, and $p^*=\frac{Np}{N-p}$ if $p<N$, with $p^*=\infty$ if $p \ge N$.

\subsection{A concave-convex problem}
Consider the problem
\begin{equation}
	\label{CCequation}
\left\{
\begin{array}
	[c]{lll}%
	-\Delta_p u =\mu g|u|^{q-2}u+f|u|^{r-2}u & \mathrm{in} & \Omega,\\
u=0  & \mathrm{on} & \partial\Omega,
\end{array}
\right. 
\end{equation}
where $\Delta_p$ is the $p$-Laplacian operator, $1<q<p<r<p^*$, and $f,g\in L^\infty(\Omega)$ with $g>0$ in $\Omega$, and $f>0$ in some subdomain $\Omega'\subset \Omega$. Let $\Phi_\mu:W_0^{1,p}(\Omega)\to \mathbb{R}$ be the energy functional associated to \eqref{CCequation}, namely,
\begin{equation*}
	\Phi_\mu(u)=\frac{1}{p}\int_\Omega |\nabla u|^p-\frac{\mu}{q}\int_\Omega g|u|^q-\frac{1}{r}\int_\Omega f|u|^r.
\end{equation*}  
One may easily check that $N(u)=\int_\Omega |\nabla u|^p$, $A(u)=\int_\Omega g|u|^q$, and $B(u)=\int_\Omega f|u|^r$ satisfy the conditions of Theorem \ref{CCStructureExistence} with $\eta=p$, $\alpha=q$, and $\beta=r$.
We have then
\begin{equation*}
	D:=B^+=\left\{u\in W_0^{1,p}(\Omega): \int_{\Omega} f|u|^r>0\right\},
\end{equation*}
so that $W_0^{1,p}(\Omega')\subset D$, and consequently $\gamma(D)=\infty$. Therefore, we infer the following result from Theorem \ref{CCStructureExistence}:

\begin{theorem} Under the previous assumptions there exists a nondecreasing sequence $(\mu_n)\subset(0,\infty)$ with the following properties:
\begin{enumerate}
\item $\mu_n \to \infty$ as $n \to \infty$.
\item  \eqref{CCequation} has no nontrivial weak solution having zero energy for $\mu<\mu_1$.
\item There exists a sequence $(u_n) \subset D$ such that $\Phi_{\mu_n}(\pm u_n)=0$ and $\Phi_{\mu_n}'(\pm u_n)=0$, i.e. $\pm u_n$ are weak solutions of \eqref{CCequation} with $\mu=\mu_n$, having zero energy.
 Moreover $u_n \in \mathcal{N}_{\mu_n}^-$ for every $n$.
 \end{enumerate}
 \end{theorem}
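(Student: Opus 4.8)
The plan is to obtain the statement as a direct corollary of Theorem~\ref{CCStructureExistence}: once the structural hypotheses (1)--(5) of that theorem are verified for the present $N,A,B$, and once $\gamma(D\cap S)=\infty$ is checked, conclusions (1)--(3) (including the information $u_n\in\mathcal{N}_{\mu_n}^-$) follow verbatim. I would work on $X=W_0^{1,p}(\Omega)$ endowed with the norm $\|u\|=\left(\int_\Omega|\nabla u|^p\right)^{1/p}$, which for $1<p<\infty$ is uniformly convex and of class $C^1$ on $X\setminus\{0\}$, so that the abstract setting applies; note that with this choice $N(u)=\|u\|^p$ and $\eta=p$, $\alpha=q$, $\beta=r$.

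The verification of (1)--(4) is essentially bookkeeping. Homogeneity in (1) is immediate from the integral expressions. For (2), $N(u)=\|u\|^p$ gives the lower bound with $C=1$, while the continuous Sobolev embeddings $W_0^{1,p}(\Omega)\hookrightarrow L^q(\Omega)$ and $W_0^{1,p}(\Omega)\hookrightarrow L^r(\Omega)$ (available since $q,r<p^*$) together with $f,g\in L^\infty(\Omega)$ yield $|A(u)|\le C\|u\|^q$ and $|B(u)|\le C\|u\|^r$. The complete continuity of $A'$ and $B'$ in (3) is where compactness enters: by Rellich--Kondrachov the embeddings into $L^q$ and $L^r$ are compact (here $\Omega$ is bounded and $q,r<p^*$), so $u_n\rightharpoonup u$ forces strong convergence in $L^q,L^r$, whence $A'(u_n)\to A'(u)$ and $B'(u_n)\to B'(u)$ in $X^*$ by a standard continuity argument for the associated Nemytskii operators. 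Positivity $A(u)>0$ for $u\neq0$ in (4) follows from $g>0$ a.e.\ in $\Omega$, and $\gamma(B^+)=\infty$ holds because, extending by zero, $W_0^{1,p}(\Omega')\setminus\{0\}\subset B^+$ (as $f>0$ on $\Omega'$); this subspace being infinite dimensional, the unit spheres of its finite-dimensional subspaces exhibit $S\cap B^+$ as having arbitrarily large genus, giving $\gamma(D\cap S)=\infty$ as well.

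The only genuinely analytic point is (5). Weak lower semicontinuity of $N$ is standard, since $\|\cdot\|$ is weakly lower semicontinuous. For the monotonicity inequality I would start from the elementary pointwise bound, valid for all $\xi,\zeta\in\mathbb{R}^N$ and $p>1$,
\[
\left(|\xi|^{p-2}\xi-|\zeta|^{p-2}\zeta\right)\cdot(\xi-\zeta)\ge\left(|\xi|^{p-1}-|\zeta|^{p-1}\right)\left(|\xi|-|\zeta|\right),
\]
which is seen by rewriting the difference of the two sides as $\left(|\xi|^{p-2}+|\zeta|^{p-2}\right)\left(|\xi||\zeta|-\xi\cdot\zeta\right)\ge0$. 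Integrating over $\Omega$ with $\xi=\nabla u$, $\zeta=\nabla v$ and using $N'(u)v=p\int_\Omega|\nabla u|^{p-2}\nabla u\cdot\nabla v$ gives
\[
\tfrac1p\left(N'(u)-N'(v)\right)(u-v)\ge\int_\Omega\left(|\nabla u|^{p-1}-|\nabla v|^{p-1}\right)\left(|\nabla u|-|\nabla v|\right);
\]
finally Hölder's inequality $\int_\Omega|\nabla u|^{p-1}|\nabla v|\le\|u\|^{p-1}\|v\|$ bounds the right-hand side below by $\left(\|u\|^{p-1}-\|v\|^{p-1}\right)\left(\|u\|-\|v\|\right)$, establishing (5) with $C=p$. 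I expect this step to be the main obstacle, although the computation shows it to be elementary and $p$-uniform, covering both $p\ge2$ and $1<p<2$ at once.

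With (1)--(5) and $\gamma(D\cap S)=\infty$ in hand, Theorem~\ref{CCStructureExistence} applies and yields at once the three assertions: the nondecreasing sequence $(\mu_n)\subset(0,\infty)$ with $\mu_n\to\infty$, the nonexistence of zero-energy nontrivial solutions for $\mu<\mu_1$, and the zero-energy weak solutions $\pm u_n$ of \eqref{CCequation} at $\mu=\mu_n$ with $u_n\in\mathcal{N}_{\mu_n}^-$.
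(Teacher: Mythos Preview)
Your proposal is correct and follows exactly the paper's approach: the paper simply asserts that $N,A,B$ ``easily'' satisfy conditions (1)--(5) of Theorem~\ref{CCStructureExistence} with $\eta=p$, $\alpha=q$, $\beta=r$, notes that $W_0^{1,p}(\Omega')\subset D=B^+$ to get infinite genus, and then invokes that theorem. You have supplied the details the paper omits---in particular the verification of condition~(5), which you carry out correctly via the pointwise inequality and H\"older---but the overall route is identical.
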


Since the above conditions on $f$ and $g$ imply that \eqref{CCequation} has no positive solution for $\mu>0$ large enough, cf. \cite[Theorem 2.2]{dFGU}, we deduce the following result:

\begin{corollary}
Under the previous conditions there exist infinitely many couples $(\mu_n,u_n)\subset (0,\infty) \times D$ such that $\pm u_n$ are sign-changing weak solutions of \eqref{CCequation} with $\mu=\mu_n$, having zero energy, for every $n$. 
\end{corollary}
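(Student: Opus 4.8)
The plan is to deduce the corollary directly from the previous theorem together with the cited nonexistence result for positive solutions. The previous theorem already furnishes a nondecreasing sequence $(\mu_n)\subset(0,\infty)$ with $\mu_n\to\infty$ and a sequence $(u_n)\subset D$ of nontrivial zero-energy weak solutions of \eqref{CCequation} at $\mu=\mu_n$, with both $\pm u_n$ solving the problem (by the evenness of $\Phi_\mu$) and $\Phi_{\mu_n}(\pm u_n)=0$. Consequently, the only new content to establish is that $u_n$ \emph{changes sign} for all sufficiently large $n$; this at once produces infinitely many couples $(\mu_n,u_n)$ of the asserted type.

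First I would argue by contradiction and suppose that, along a subsequence, $u_n$ has constant sign. Since $\Phi_\mu$ is even, $-u_n$ solves \eqref{CCequation} whenever $u_n$ does, so without loss of generality I may take $u_n\ge 0$ along this subsequence. As $u_n\in D=B^+$, it is a nontrivial nonnegative weak solution of \eqref{CCequation} at $\mu=\mu_n$. I would then upgrade nonnegativity to strict positivity: by the standard regularity theory and the strong maximum principle for the $p$-Laplacian, a nontrivial nonnegative weak solution satisfies $u_n>0$ in $\Omega$, i.e.\ $u_n$ is a positive solution of \eqref{CCequation} at $\mu=\mu_n$.

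To close the argument I would invoke \cite[Theorem 2.2]{dFGU}, which (under our hypotheses $g>0$ in $\Omega$ and $1<q<p<r<p^*$) provides a threshold $\overline{\mu}>0$ beyond which \eqref{CCequation} admits no positive solution. Since $\mu_n\to\infty$, we have $\mu_n>\overline{\mu}$ for all large $n$, contradicting the positivity of $u_n$. Hence $u_n$ is sign-changing for every sufficiently large $n$, and discarding the finitely many exceptional indices yields the desired infinite family of sign-changing zero-energy solutions.

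I do not expect any deep obstacle here: the construction of the solutions and the divergence $\mu_n\to\infty$ are already in hand from the previous theorem. The only step demanding care is the passage from a nonnegative nontrivial weak solution to a strictly positive one, where the $p$-Laplacian regularity and strong maximum principle enter, together with verifying that our $f,g$ meet the hypotheses of \cite[Theorem 2.2]{dFGU}. Once positivity is secured, the contradiction with the nonexistence threshold is immediate precisely because $\mu_n\to\infty$.
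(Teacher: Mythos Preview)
Your proposal is correct and follows the same approach as the paper: combine the previous theorem (which gives $\mu_n\to\infty$ and zero-energy solutions $u_n$) with the nonexistence of positive solutions for large $\mu$ from \cite[Theorem 2.2]{dFGU} to force $u_n$ to change sign for all large $n$. The paper states this deduction in a single sentence, while you have spelled out the contradiction argument and the maximum-principle step from nonnegative to positive, but the underlying idea is identical.
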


\subsection{A Schrödinger-Poisson problem}
Consider the problem
\begin{equation}
	\label{SPequation}
	\left\{
	\begin{array}
		[c]{lll}%
		-\Delta u+\omega u+\mu \phi u=|u|^{p-2}u & \mathrm{in} & \mathbb{R}^3,\\
		-\Delta\phi+a^2\Delta ^2u=4\pi u^2  & \mathrm{in} & \mathbb{R}^3,
	\end{array}
	\right. 
\end{equation}
where $p\in(2,3)$, $\omega>0$, and $a\ge 0$. We look for radial solutions of \eqref{SPequation}, i.e. $u\in H_r^1(\mathbb{R}^3)$ satisfying \eqref{SPequation}. Denote 
\begin{equation*}
	\mathcal{D}_r=\{\phi\in D_r^{1,2}(\mathbb{R}^3):\Delta\phi\in L_r^2(\mathbb{R}^3)\}.
\end{equation*}
It follows that for each $u\in H_r^1(\mathbb{R}^3)$, there exists an unique $\phi_u\in \mathcal{D}_r$ solving the second equation in \eqref{SPequation}, see \cite{SS}. Moreover, if $\Phi_\mu:H_r^1(\mathbb{R}^3)\to \mathbb{R}$ is defined by
\begin{equation*}
	\Phi_\mu(u)=\frac{1}{2}\int_{\mathbb{R}^3} |\nabla u|^2+\frac{\omega}{2}\int_{\mathbb{R}^3} |u|^2+\frac{\mu}{4}\int_{\mathbb{R}^3} \phi_uu^2-\frac{1}{p}\int_{\mathbb{R}^3} |u|^p, 
\end{equation*}
then $\Phi_\mu$ is $C^1$ and its critical points are solutions to \eqref{SPequation}. One may check that 
$N(u)=\int_{\mathbb{R}^3} \left(|\nabla u|^2+\omega |u|^2\right)$, $A(u)=\int_{\mathbb{R}^3} \phi_uu^2$, and $B(u)=\int_{\mathbb{R}^3} |u|^p$ satisfy the conditions of Theorem \ref{APPCC} with $\eta=2$, $\alpha=4$, and $\beta=p$ (for the proof of condition (5) see \cite{SS}). 
Thus we have $D=H_r^1(\mathbb{R}^3) \setminus \{0\}$.

Finally,  let us show for any $0<c<d$ the set $\{u\in H_r^1(\mathbb{R}^3): \Phi_\mu(u)=0, c\leq \mu \leq d\}$ is uniformly bounded.  Indeed, set $D_\mu:=\frac{\mu}{16\pi}-\varepsilon^4>0$ and note that
	\begin{eqnarray*}
		0=\Phi_\mu(u)&=&\frac{1}{4}\|\nabla u\|_2^2+\frac{1}{4}\|\nabla u\|_2^2+\frac{1}{2}\| u\|_2^2+\frac{\mu}{4}\int_{\mathbb{R}^3} \phi_u u^2-\frac{1}{p}\|u\|_p^p \nonumber\\
		&\ge& \frac{1}{4}\|\nabla u\|_2^2+D_\mu\|\phi_u\|_\mathcal{D}^2+\frac{1}{2}\| u\|_2^2+\frac{\pi\varepsilon^2}{4}\|u\|_3^3-\frac{1}{p}\|u\|_p^p\nonumber\\
		&=&\frac{1}{4}\|u\|^{2}+D_\mu\|\phi_{u}\|_{\mathcal D}^{2}+\int_{\mathbb{R}^3} h(u) \label{M2}
	\end{eqnarray*}
	where
	\begin{equation*}\label{ft}
		h(t)=\frac{1}{4}t^2+\frac{\pi\varepsilon^2}{4}t^3-\frac{1}{p}t^p,\quad \forall\, t>0.
	\end{equation*}
We can choose $\varepsilon>0$ such that $D_\mu>0$ for all $\mu \in [c,d]$. Therefore, the claim follows arguing as in the proof of \cite[Proposition 3.1]{SS}.

From Theorem \ref{APPCC} we infer the following result: 

\begin{theorem} Under the previous assumptions there exists a nonincreasing sequence $(\mu_n)\subset(0,\infty)$ with the following properties:
\begin{enumerate}
\item $\mu_n \to 0$ as $n \to \infty$.
\item  \eqref{SPequation} has no nontrivial radial weak solution having zero energy for $\mu>\mu_1$.
\item There exists a sequence $(u_n) \subset D$ such that $\Phi_{\mu_n}(\pm u_n)=0$ and $\Phi_{\mu_n}'(\pm u_n)=0$, i.e. $\pm u_n$ are weak radial solutions of \eqref{SPequation} with $\mu=\mu_n$, having zero energy.
 Moreover $u_n \in \mathcal{N}_{\mu_n}^+$ for every $n$.
 \end{enumerate}
\end{theorem}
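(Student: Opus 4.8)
The plan is to obtain this theorem as a direct consequence of Theorem \ref{APPCC}, so that the real work reduces to verifying that the radial Schr\"odinger--Poisson problem \eqref{SPequation} belongs to the second class of functionals. With the identifications $\eta=2$, $\alpha=4$, $\beta=p$ and $N(u)=\int_{\mathbb{R}^3}(|\nabla u|^2+\omega|u|^2)$, $A(u)=\int_{\mathbb{R}^3}\phi_u u^2$, $B(u)=\int_{\mathbb{R}^3}|u|^p$, the ordering $1<\eta<\beta<\alpha$ holds precisely because $p\in(2,3)$. First I would record the structural hypotheses (1)--(5): homogeneity is immediate from the scaling $\phi_{su}=s^2\phi_u$; the coercivity bound (2) follows since $N(u)^{1/2}$ is equivalent to the $H_r^1$ norm and $B$ is controlled through the Sobolev embedding $H_r^1(\mathbb{R}^3)\hookrightarrow L^p$; complete continuity of $A'$ and $B'$ in (3) uses the compact radial embeddings together with the regularity of $u\mapsto\phi_u$ from \cite{SS}; in (4) one has $B(u)=\|u\|_p^p>0$ for every $u\neq0$, so $B^+=D=H_r^1(\mathbb{R}^3)\setminus\{0\}$ and thus $\gamma(B^+)=\infty$; and (5) is the strong monotonicity of $u\mapsto N'(u)$ supplied by the uniform convexity of $H_r^1(\mathbb{R}^3)$, established in \cite{SS}.

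The only genuinely analytic point is condition (6), the uniform boundedness over compact parameter intervals of the zero-energy set $\{u:\Phi_\mu(u)=0\}$, which contains the critical set appearing in (6); this is where I expect the main difficulty to lie. Here I would invoke the energy identity displayed before the statement: for $\mu\in[c,d]$ one fixes a small $\varepsilon>0$ so that $D_\mu=\tfrac{\mu}{16\pi}-\varepsilon^4>0$, and the lower bound for the nonlocal term from \cite{SS} gives $0=\Phi_\mu(u)\ge\tfrac14\|u\|^2+D_\mu\|\phi_u\|_{\mathcal D}^2+\int_{\mathbb{R}^3}h(u)$, with $h(t)=\tfrac14 t^2+\tfrac{\pi\varepsilon^2}{4}t^3-\tfrac1p t^p$. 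Discarding the nonnegative term $D_\mu\|\phi_u\|_{\mathcal D}^2$ and using that, since $2<p<3$, the cubic term dominates the $L^p$ term at infinity, the integral $\int_{\mathbb{R}^3}h(u)$ is bounded below by lower-order quantities (interpolating $\|u\|_p$ between $\|u\|_2$ and $\|u\|_3$ and applying Young's inequality), which forces $\|u\|$ to stay bounded uniformly in $\mu\in[c,d]$; the details follow those of \cite[Proposition 3.1]{SS}. This is exactly the place where the subcriticality $p<3$ and the structure of the Bopp--Podolski term are essential.

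With all hypotheses in force and $\gamma(D\cap S)=\gamma(S)=\infty$ (the Krasnoselskii genus of the unit sphere of the infinite-dimensional space $H_r^1(\mathbb{R}^3)$ being infinite), Theorem \ref{APPCC} applies and delivers the three conclusions: a nonincreasing sequence $(\mu_n)\subset(0,\infty)$ with $\mu_n\to0$; the absence of nontrivial zero-energy critical points of $\Phi_\mu$ for $\mu>\mu_1$; and a sequence $(u_n)\subset D$ with $\Phi_{\mu_n}(\pm u_n)=0$, $\Phi'_{\mu_n}(\pm u_n)=0$ and $u_n\in\mathcal N^+_{\mu_n}$. It then remains only to translate these abstract statements back into the PDE: since $\Phi_\mu$ is the energy functional of \eqref{SPequation} and $\phi_u$ solves its second equation, every critical point of $\Phi_\mu$ is a weak radial solution, so the pairs $\pm u_n$ are precisely the asserted zero-energy radial solutions of \eqref{SPequation} at $\mu=\mu_n$, which completes the argument.
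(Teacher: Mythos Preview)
Your proposal is correct and follows essentially the same approach as the paper: verify that the Schr\"odinger--Poisson functional fits the second abstract class with $\eta=2$, $\alpha=4$, $\beta=p$, check conditions (1)--(6), and then invoke Theorem~\ref{APPCC}. Your treatment of the hypotheses is in fact somewhat more explicit than the paper's (which simply asserts that (1)--(5) ``may be checked'' and refers to \cite{SS}), and your handling of condition~(6) via the inequality $0=\Phi_\mu(u)\ge\tfrac14\|u\|^2+D_\mu\|\phi_u\|_{\mathcal D}^2+\int h(u)$ and the argument of \cite[Proposition~3.1]{SS} matches the paper's exactly.
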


\subsection{A Kirchhoff problem} \label{sk} Consider the problem
\begin{equation}
	\label{KIRequation}
	\left\{
	\begin{array}
		[c]{lll}%
		-(a+\mu \int_{\Omega} |\nabla u|^2) \Delta u =f|u|^{r-2}u & \mathrm{in} & \Omega,\\
		u=0  & \mathrm{on} & \partial\Omega,
	\end{array}
	\right. 
\end{equation}
where $a>0$, $2<r<4$, and $f\in L^\infty(\Omega)$ with $f>0$ in some subdomain $\Omega'\subset \Omega$. Let $\Phi_\mu:H_0^1(\Omega)\to \mathbb{R}$ be the energy functional associated to \eqref{KIRequation}, i.e.
\begin{equation*}\Phi_{\mu}(u)=\frac{a}{2}\int_{\Omega} |\nabla u|^2+\frac{\mu}{4}\left(\int_{\Omega}|\nabla u|^2\right)^2-\frac{1}{r}\int_{\Omega} f|u|^{r},\ u\in H_0^1(\Omega).
\end{equation*}
It is clear that $N(u)=\int_\Omega |\nabla u|^2$, $A(u)=\left(\int_\Omega |\nabla u|^2\right)^2$, and $B(u)=\int_\Omega f|u|^r$ satisfy the conditions of Theorem \ref{APPCC} with $\eta=2$, $\alpha=4$, and $\beta=r$.
Once again we have
\begin{equation*}
	D:=B^+=\left\{u\in W_0^{1,p}(\Omega): \int_\Omega f|u|^r>0\right\}.
\end{equation*}	
We also note that $\mathcal{N}_\mu$ is uniformly bounded for $\mu \in [c,d] \subset (0,\infty)$, since for
$u \in \mathcal{N}_\mu$ we have 
$$a\|u\|^2+\mu \|u\|^4 - \int_\Omega f|u|^r=0.$$
From Theorem \ref{APPCC} and Remark \ref{r1} we infer the following result:

\begin{theorem} Under the previous assumptions there exists a nonincreasing sequence $(\mu_n)\subset(0,\infty)$ with the following properties:
\begin{enumerate}
\item $\mu_n \to 0$.
\item  \eqref{KIRequation} has no nontrivial weak solution having zero energy for $\mu>\mu_1$.
\item There exists a sequence $(u_n) \subset D$ such that $\Phi_{\mu_n}(\pm u_n)=0$ and $\Phi_{\mu_n}'(\pm u_n)=0$, i.e. $\pm u_n$ are weak solutions of \eqref{KIRequation} with $\mu=\mu_n$, having zero energy.
 Moreover $u_n \in \mathcal{N}_{\mu_n}^+$ for every $n$.
 \end{enumerate}

\end{theorem}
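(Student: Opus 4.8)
The plan is to obtain the statement as a direct consequence of Theorem \ref{APPCC}, in the adapted form given by Remark \ref{r1}(1). The ambient framework is in place: $X=H_0^1(\Omega)$ is a Hilbert space, hence uniformly convex, with $C^1$ norm away from the origin. The first step is to record the algebraic structure. With $N(u)=\int_\Omega|\nabla u|^2$, $A(u)=\left(\int_\Omega|\nabla u|^2\right)^2$ and $B(u)=\int_\Omega f|u|^r$, the homogeneity degrees are $\eta=2$, $\alpha=4$ and $\beta=r$, and since $2<r<4$ the ordering $1<\eta<\beta<\alpha$ required by Theorem \ref{APPCC} holds. The essential point is that $A=N^2$, i.e. $A$ is the power $N^\sigma$ with $\sigma=2$. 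This is exactly why the problem falls outside the literal hypotheses of Theorem \ref{APPCC}: the functional $A$ is not completely continuous, because $N$ is the square of the $H_0^1$-norm and is only weakly lower semicontinuous, not weakly continuous. Remark \ref{r1}(1) is designed for precisely this situation, so I would invoke it from the start to replace the complete continuity of $A'$.

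Next I would check conditions (1)--(5). Homogeneity (1) and the positivity $A(u)=\|u\|^4>0$ for $u\neq0$ in (4) are immediate. For (2) one writes $N(u)=\|u\|^2$ and, by the Sobolev embedding $H_0^1(\Omega)\hookrightarrow L^r(\Omega)$, bounds $|B(u)|\le\|f\|_\infty\|u\|_r^r\le C\|u\|^r$. The complete continuity of $B'$ in (3) follows from the compactness of this embedding (valid since $r<2^*$), while the complete continuity of $A'$ is no longer needed by Remark \ref{r1}. Condition (5) for $N=\|\cdot\|^2$ is the standard strong monotonicity of the Dirichlet form: $(N'(u)-N'(v))(u-v)=2\|u-v\|^2\ge 2(\|u\|-\|v\|)^2$, and $\|\cdot\|^2$ is weakly lower semicontinuous by convexity. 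Finally $\gamma(B^+)=\infty$ because $H_0^1(\Omega')\setminus\{0\}\subset B^+$: for $0\neq u\in H_0^1(\Omega')$ one has $\int_\Omega f|u|^r=\int_{\Omega'}f|u|^r>0$, and since $H_0^1(\Omega')$ is infinite-dimensional its sphere contains symmetric compact sets of arbitrarily large genus, so that $\gamma(D\cap S)=\infty$.

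The substantive verification is condition (6): the set $\{u:\Phi_\mu'(u)u=\Phi_\mu(u)=0\}$ must be uniformly bounded for $\mu$ in a compact subinterval $[c,d]\subset(0,\infty)$. This set is contained in the Nehari set $\mathcal{N}_\mu$, on which the identity $a\|u\|^2+\mu\|u\|^4=\int_\Omega f|u|^r$ holds. Bounding the right-hand side by $C\|u\|^r$ and discarding the term $a\|u\|^2\ge0$ gives $\mu\|u\|^4\le C\|u\|^r$, hence $\|u\|^{4-r}\le C/\mu\le C/c$; since $4-r>0$ this is a uniform upper bound on $\|u\|$ over $\mu\in[c,d]$. (Discarding instead the quartic term yields $\|u\|\ge(a/C)^{1/(r-2)}$, confirming that $\mathcal{N}_\mu$ stays bounded away from the origin.) I expect this monotone-in-$\mu$ a priori estimate, together with the appeal to Remark \ref{r1} to bypass the complete continuity of $A'$ in the Palais--Smale argument, to be the only nonroutine steps; these are the real obstacles, the remainder being bookkeeping.

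With all the hypotheses in place, Theorem \ref{APPCC} produces a nonincreasing sequence $(\mu_n)\subset(0,\infty)$ with $\mu_n\to0$ and pairs $\pm u_n\in B^+=D$ satisfying $\Phi_{\mu_n}(\pm u_n)=0$ and $\Phi_{\mu_n}'(\pm u_n)=0$, with $u_n\in\mathcal{N}_{\mu_n}^+$. Since the critical points of $\Phi_\mu$ are exactly the weak solutions of \eqref{KIRequation}, this is precisely the assertion of the theorem.
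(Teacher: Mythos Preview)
Your proposal is correct and follows essentially the same approach as the paper: you verify the hypotheses of Theorem \ref{APPCC}, invoke Remark \ref{r1}(1) to handle the fact that $A=N^2$ is not completely continuous, and establish condition (6) via the Nehari identity $a\|u\|^2+\mu\|u\|^4=\int_\Omega f|u|^r$, exactly as the paper does. Your write-up is in fact more detailed than the paper's own (very brief) argument.
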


For more on Kirchhoff type problems we refer the reader to \cite{PR} and references therein. Let us note that in \cite{S1} (the idea started in \cite{S2})  an abstract framework inspired by the Kirchhoff and Schrödinger-Poisson equations started to be established. We note that the value $\lambda_0^*$ considered in \cite{S1} corresponds to $\mu_1$, and we conjecture that the scenario depicted in Figure \ref{fig:M2} should also hold for \eqref{KIRequation}. In \cite[Theorem 6.4]{S1} it was proved (for $f\equiv 1$) that there exists $\lambda^*>0$ such that for each $\mu\in (0,\lambda^*)$ one can find two positive solutions $u_\mu\in \mathcal{N}_\mu^+$, $v_\mu\in \mathcal{N}_\mu^-$ of \eqref{KIRequation}. When $\lambda=\lambda^*$ we also proved the existence of a positive solution $w_{\lambda^*}\in \mathcal{N}_{\lambda^*}^0$ (recall that $\mathcal{N}^0_\mu:=\{u \in \mathcal{N}_\mu:  J_{\mu}'(u)u = 0\}$, where $J_\mu(u):=\Phi_\mu'(u)u$ for $u \in X$).  These solutions satisfy:
\begin{enumerate}
	\item $\max\{\Phi_{\mu}(u_\mu),0\}<\Phi_\mu(v_\mu)$, for $\mu\in (0,\lambda^*)$.
	\item The maps $\mu \mapsto \Phi_\mu(u_\mu),\Phi_\mu(v_\mu)$ are continuous in $(0,\lambda^*)$.
	\item $\displaystyle\lim_{\mu\to \lambda^*} \Phi_\mu(u_\mu)=\lim_{\mu \to \lambda^*} \Phi_\mu(v_\mu)= \Phi_{\lambda^*}(w_{\lambda^*})$.
	\item For $\mu>\lambda^*$ the Nehari set is empty, and consequently $0$ is the only critical point of $\Phi_\mu$.
\end{enumerate}
Thus the picture is now as in Figure \ref{fig:M3} and we conjecture, at least for the Kirchhoff equation, that the dashed curves in Figure \ref{fig:M2} have a common endpoint.
\begin{figure}[h]
	\centering
	\begin{tikzpicture}[>=latex]
		\draw[->] (-1,0) -- (5,0) node[below] {$\mu$};
		\foreach \x in {}
		\draw[shift={(\x,0)}] (0pt,2pt) -- (0pt,-2pt) node[below] {\footnotesize $\x$};
		\draw[->] (0,-2) -- (0,2) node[left] {$\mbox{Energy}$};
		\foreach \y in {}
		\draw[shift={(0,\y)}] (2pt,0pt) -- (-2pt,0pt) node[left] {\footnotesize $\y$};
		\node[below left] at (0,0) {\footnotesize $0$};
		\draw[red,thick] (0,-2) .. controls (0,-1.5) and (0,0) .. (4,1.5);
		\draw[blue,thick] (0,.5) .. controls (0,0.5) and (0,0.6) .. (3,1.3);
		\draw[blue,thick] (3,1.3) .. controls (3,1.3) and (4,1.5) .. (4,1.5);
		\draw [thick] (1.16,-.1) node[below]{$\lambda_0^*=\mu_1$} -- (1.16,0.1); 
		\draw [thick] (4,-.1) node[below]{$\lambda^*$} -- (4,0.1); 
		\draw [thick] (-.1,-2) node[left]{$-\infty$} -- (.1,-2); 
		\node[] at (1,1.5) { {\color{blue}$\Phi_\mu(v_\mu)$}};
		\node[] at (3,0.5) { {\color{red}$\Phi_\mu(u_\mu)$}};
	\end{tikzpicture}
	\caption{Positive solution branches depending on $\mu$ for \eqref{KIRequation}.} \label{fig:M3}
	
\end{figure}
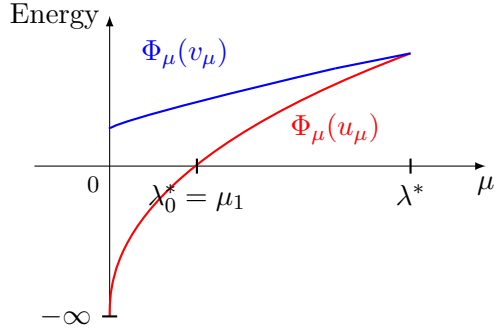
\subsection{A $(p,q)$-Laplacian problem}
 We consider now a situation which is not covered by Theorems \ref{CCStructureExistence} and \ref{APPCC}, namely, the problem
\begin{equation}
	\label{PQequation}
	\left\{
	\begin{array}
		[c]{lll}%
		-\Delta_pu-\Delta_qu =\mu g|u|^{q-2}u+f|u|^{r-2}u & \mathrm{in} & \Omega,\\
		u=0  & \mathrm{on} & \partial\Omega,
	\end{array}
	\right. 
\end{equation}
where  $1<q<p<r<p^*$, and $f,g\in L^\infty(\Omega)$ with $g>0$ in $\Omega$, and $f>0$ in some subdomain $\Omega'\subset \Omega$.  We refer to \cite{MM} for an account on $(p,q)$-Laplacian type problems, i.e. problems involving the operator $-\Delta_p-\Delta_q$.

Let $\Phi_\mu:W_0^{1,p}(\Omega)\to \mathbb{R}$ be the energy functional associated to \eqref{PQequation}, i.e.
\begin{equation*}\Phi_{\mu}(u)=\frac{1}{p}\int_\Omega|\nabla u|^p+\frac{1}{q}\int_\Omega |\nabla u|^q-\frac{\mu}{q}\int_\Omega g|u|^q-\frac{1}{r}\int_\Omega f|u|^{r},\ u\in W_0^{1,p}(\Omega).
\end{equation*}
It is clear that $\Phi_\mu $ is $C^1$. Moreover
\begin{equation*}
	D=\left\{u\in W_0^{1,p}(\Omega): \int_\Omega f|u|^r>0\right\},
\end{equation*}
\begin{equation*}
	t_0(u)=\left(\frac{r}{p}\frac{p-q}{r-q}\frac{\|u\|^p}{\int_\Omega f|u|^{r}}\right)^{\frac{1}{r-p}} \quad \forall u\in D,
\end{equation*}
and
\begin{equation*}
	\Lambda(u)=\frac{q}{p}\frac{r-p}{r-q}\left(\frac{r}{p}\frac{p-q}{r-q}\right)^{\frac{p-q}{r-p}}\frac{\|u\|^p\frac{r-q}{r-p}}{\left(\int_\Omega g|u|^q\right)\left(\int_\Omega f|u|^{r}\right)^{\frac{p-q}{r-p}}}+\frac{\int_\Omega |\nabla u|^q}{\int_\Omega g|u|^q},\quad \forall u\in D.
\end{equation*}

\begin{proposition}\label{H2pq}
$\tilde{\Lambda}$ is bounded from below by a positive constant and satisfies the Palais-Smale condition.
\end{proposition}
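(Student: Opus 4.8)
The plan is to prove the two assertions separately. The lower bound is elementary; the Palais--Smale condition carries the real difficulty, which stems entirely from the extra $q$-Laplacian term in the principal part — precisely what places \eqref{PQequation} outside the scope of Theorems \ref{CCStructureExistence} and \ref{APPCC}. For the lower bound I would argue directly on the explicit formula for $\Lambda$. On $D=\{u\in W_0^{1,p}(\Omega):\int_\Omega f|u|^r>0\}$ both summands of $\Lambda$ are positive, and the second one is already bounded below by a positive constant: since $g\in L^\infty(\Omega)$ and $W_0^{1,p}(\Omega)\hookrightarrow W_0^{1,q}(\Omega)$ (as $q<p$ and $\Omega$ is bounded), the Poincaré inequality in $W_0^{1,q}(\Omega)$ gives $\int_\Omega g|u|^q\le \|g\|_\infty\int_\Omega|u|^q\le \|g\|_\infty C_P\int_\Omega|\nabla u|^q$, whence $\frac{\int_\Omega|\nabla u|^q}{\int_\Omega g|u|^q}\ge(\|g\|_\infty C_P)^{-1}$. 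As the first summand is nonnegative, $\Lambda\ge(\|g\|_\infty C_P)^{-1}>0$ on $D$, so $\tilde\Lambda$ is bounded below by a positive constant.

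For the Palais--Smale condition, let $(u_n)\subset S\cap D$ with $(\Lambda(u_n))$ bounded and $\tilde\Lambda'(u_n)\to0$. Up to a subsequence $u_n\rightharpoonup u$ in $W_0^{1,p}(\Omega)$, and the compact embeddings give $u_n\to u$ in $L^q$ and $L^r$, so $\int_\Omega g|u_n|^q\to\int_\Omega g|u|^q$ and $\int_\Omega f|u_n|^r\to\int_\Omega f|u|^r$. I would first show $u\in D$: if $\int_\Omega f|u|^r=0$, then, since $\|u_n\|=1$, the explicit formula for $t_0$ forces $t_0(u_n)\to\infty$; as $\int_\Omega g|u_n|^q$ is bounded above on $S$, the first summand of $\Lambda(u_n)$, namely $\frac{r-p}{r-q}\frac{q}{p}\frac{\|u_n\|^p}{\int_\Omega g|u_n|^q}\,t_0(u_n)^{p-q}$, diverges, contradicting the boundedness of $(\Lambda(u_n))$. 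Hence $\int_\Omega f|u|^r>0$, so $u\ne0$, $\int_\Omega g|u|^q>0$, and $s_n:=t_0(u_n)\to s_*\in(0,\infty)$. Next, the natural-constraint decomposition from the proof of Theorem \ref{CCStructureExistence} applies verbatim: writing $u_n-u=v_n+t_nu_n$ with $v_n\in\mathcal T_{S\cap D}(u_n)$, one checks $(t_n)$ and hence $(v_n)$ are bounded, so $\Lambda'(u_n)(u_n-u)=\Lambda'(u_n)v_n\to0$; substituting into \eqref{pl} and dividing by $I_2(s_nu_n)$, which is bounded away from $0$, yields $\Phi'_{\Lambda(u_n)}(s_nu_n)(u_n-u)\to0$.

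Expanding this last relation and using that $\int_\Omega g|u_n|^{q-2}u_n(u_n-u)$ and $\int_\Omega f|u_n|^{r-2}u_n(u_n-u)$ vanish (complete continuity, via the strong $L^q,L^r$ convergence), one is left with
\begin{equation*}
s_n^{p-1}T_n^p+s_n^{q-1}T_n^q\to0,\qquad T_n^p:=\int_\Omega|\nabla u_n|^{p-2}\nabla u_n\cdot\nabla(u_n-u),\ \ T_n^q:=\int_\Omega|\nabla u_n|^{q-2}\nabla u_n\cdot\nabla(u_n-u).
\end{equation*}
This is the main obstacle: the principal part now contains two non-compact operators, so one cannot extract $T_n^p\to0$ from a single complete-continuity argument as in Theorem \ref{CCStructureExistence}. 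I would resolve it by monotonicity. Since $u_n\rightharpoonup u$ in both $W_0^{1,p}(\Omega)$ and $W_0^{1,q}(\Omega)$, subtracting the vanishing quantities $\int_\Omega|\nabla u|^{p-2}\nabla u\cdot\nabla(u_n-u)$ and $\int_\Omega|\nabla u|^{q-2}\nabla u\cdot\nabla(u_n-u)$ and invoking the monotonicity of $-\Delta_p$ and $-\Delta_q$ gives $\liminf T_n^p\ge0$ and $\liminf T_n^q\ge0$. As $(T_n^p),(T_n^q)$ are bounded and $s_n\to s_*>0$, along any subsequence on which $T_n^p\to\alpha$ and $T_n^q\to\beta$ the displayed limit forces $s_*^{p-1}\alpha+s_*^{q-1}\beta=0$ with $\alpha,\beta\ge0$, hence $\alpha=\beta=0$; thus $T_n^p\to0$ on the full sequence. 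Combining $T_n^p\to0$ with $\int_\Omega|\nabla u|^{p-2}\nabla u\cdot\nabla(u_n-u)\to0$ and the $(S_+)$ property of $-\Delta_p$ on $W_0^{1,p}(\Omega)$ (equivalently, condition (5) for $N(u)=\int_\Omega|\nabla u|^p$ together with the uniform-convexity argument of Theorem \ref{CCStructureExistence}) yields $u_n\to u$ in $W_0^{1,p}(\Omega)$. Since then $\|u\|=1$ and $u\in D$, the limit lies in $S\cap D$, completing the proof.
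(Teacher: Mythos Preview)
Your proof is correct and follows essentially the same route as the paper: both establish the positive lower bound from one of the two summands of $\Lambda$ (you use the $q$-Rayleigh quotient via Poincar\'e, the paper uses the concave--convex part $\Lambda_{cc}$), and both run the tangent-space decomposition to obtain $\Lambda'(u_n)(u_n-u)\to0$ and then extract strong convergence from the sum of the $p$- and $q$-principal parts. The only noteworthy difference is that you invoke the abstract identity \eqref{pl} rather than expanding $\Lambda'=\Lambda_{cc}'+K'$ directly, and you make the separation of $T_n^p$ and $T_n^q$ via monotonicity explicit, whereas the paper simply records $\int_\Omega(|\nabla u_n|^{p-2}+|\nabla u_n|^{q-2})\nabla u_n\cdot\nabla(u_n-u)\to0$ and passes immediately to $u_n\to u$; your version is more transparent on this point but not a different argument.
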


\begin{proof}
Note that $\Lambda=\Lambda_{cc}+K$, where $\Lambda_{cc}$ is the corresponding functional associated to the problem \eqref{CCequation} and $K(u):=\frac{1}{q}\frac{\int_\Omega |\nabla u|^q}{\int_\Omega g|u|^q}$ for any $u \in D$.
In particular $\Lambda$ is bounded from below by a positive constant. Let us show that $\tilde{\Lambda}$ satisfies (H2) : we pick $(u_n) \subset S\cap D$  such that $(\Lambda(u_n))$ is bounded and $\tilde{\Lambda}'(u_n)\to 0$, i.e. $\left|\Lambda'(u_n)v\right| \leq \epsilon_n \|v\|$ for any $v \in \mathcal{T}_{S\cap D}(u_n)$, with $\epsilon_n \to 0$.
Since $(u_n)$ is bounded, we have $u_n\rightharpoonup u$ in $W_0^{1,p}(\Omega)$ up to a subsequence. From the expression of $\Lambda$ it is clear that $u \in D$.

 Now observe that for any $w\in  W_0^{1,p}(\Omega)$ and any $n$, there exist an unique pair $(t_n,v_n)\in \mathbb{R}\times \mathcal{T}_{S\cap D}(u_n)$ such that $w=v_n+t_nu_n$. Hence $i'(u_n)w=t_ni'(u_n)u_n=t_n$, so $(t_n)$ is bounded, and consequently $(v_n)$ is bounded as well. Therefore $\Lambda'(u_n)v_n\to 0$ and since $\Lambda'(u_n)u_n=0$, we conclude that $\Lambda'(u_n)w\to 0$. Taking $w=u_n-u$ and noting that 
 \begin{equation*}
 	K'(u)v=\frac{\left(\int_\Omega g|u|^q\right)\int_\Omega |\nabla u|^{q-2}\nabla u \nabla v-\left(\int |\nabla u|^q\right)\int_\Omega g|u|^{q-2}uv}{\left(\int g|u|^q\right)^2}, \quad \mbox{ for } u \in D, v\in W_0^{1,p}(\Omega),
 \end{equation*}
we infer that
\begin{equation*}
	\int_\Omega \left(|\nabla u_n|^{p-2} + |\nabla u_n|^{q-2}\right) \nabla u_n \nabla (u_n-u)\to 0,
\end{equation*}
and hence $u_n\to u$ in $W_0^{1,p}(\Omega)$ with $u \in S \cap D$.
\end{proof}

It is also clear that if $(u_n) \subset S$ and $u_n \rightharpoonup 0$ in $X$ then $\Lambda(u_n) \to \infty$, and $I_2(u)=q^{-1}\int_\Omega g|u|^q>0$ for every $u \neq 0$. 
Thus, Theorem \ref{THM1} yields the following result: 

\begin{theorem} Under the previous assumptions there exists a nondecreasing sequence $(\mu_n)\subset(0,\infty)$ with the following properties:
\begin{enumerate}
\item $\mu_n \to \infty$.
\item  \eqref{PQequation} has no nontrivial weak solution having zero energy for $\mu<\mu_1$.
\item There exists a sequence $(u_n) \subset D$ such that $\Phi_{\mu_n}(\pm u_n)=0$ and $\Phi_{\mu_n}'(\pm u_n)=0$, i.e. $\pm u_n$ are weak solutions of \eqref{PQequation} with $\mu=\mu_n$, having zero energy.
 Moreover $u_n \in \mathcal{N}_{\mu_n}^-$ for every $n$.
 \end{enumerate}
 \end{theorem}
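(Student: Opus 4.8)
The plan is to read the statement as a direct application of Theorem~\ref{THM1}, since the computations preceding it together with Proposition~\ref{H2pq} already furnish almost all of the required hypotheses. First I would confirm (H1) with $D=B^+$. Writing the fibering map explicitly,
$$\psi_u(t)=\frac{q}{p}\frac{\int_\Omega|\nabla u|^p}{\int_\Omega g|u|^q}\,t^{p-q}+\frac{\int_\Omega|\nabla u|^q}{\int_\Omega g|u|^q}-\frac{q}{r}\frac{\int_\Omega f|u|^r}{\int_\Omega g|u|^q}\,t^{r-q},$$
one sees that the extra term $K$ contributes only the $t$-independent constant $\int_\Omega|\nabla u|^q/\int_\Omega g|u|^q$ and so does not affect the critical point structure. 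Because $p-q<r-q$ and $\int_\Omega g|u|^q>0$ for $u\neq0$, the map $\psi_u$ has a unique positive critical point exactly when $\int_\Omega f|u|^r>0$, i.e. on $D=B^+$, which is open since $B$ is continuous; this $t_0(u)$ is a global maximum because $\psi_u(0^+)$ is finite and $\psi_u(t)\to-\infty$ as $t\to\infty$. Differentiating once more and using the critical point relation gives $\psi_u''(t_0(u))=c(u)(p-r)$ with $c(u)>0$, hence $\psi_u''(t_0(u))<0$ and the maximum is non-degenerate. This establishes (H1).

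Next I would gather the remaining hypotheses. The boundedness of $\Lambda$ from below by a positive constant (so that $\mu_1>0$) and the Palais--Smale condition for $\tilde\Lambda$ at every level are exactly the content of Proposition~\ref{H2pq}. For the genus condition I would argue as in the concave--convex application: extending by zero embeds $W_0^{1,p}(\Omega')\setminus\{0\}$ into $D$, so that $\gamma(D\cap S)=\infty$. With (H1), (H2), the Palais--Smale condition at each $\mu_n$, and $\gamma(D\cap S)=\infty$ available, Theorem~\ref{THM1}(2) produces a nondecreasing sequence $(\mu_n)\subset(0,\infty)$ and pairs $\pm u_n\in D$ with $\Phi_{\mu_n}(\pm u_n)=0$ and $\Phi_{\mu_n}'(\pm u_n)=0$, which are the zero energy weak solutions of \eqref{PQequation} for $\mu=\mu_n$; this is item (3). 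Item (2) is immediate from Theorem~\ref{THM1}(1), since any nontrivial zero energy critical point of $\Phi_\mu$ forces $\mu=\Lambda(u)\ge\mu_1$.

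To finish item (1), I would invoke Lemma~\ref{lsi}: the observation recorded just before the statement, namely that $u_n\rightharpoonup0$ in $S$ forces $\Lambda(u_n)\to\infty$ (the term $\int_\Omega|\nabla u_n|^q$ controlling the norm stays away from zero while $\int_\Omega g|u_n|^q$ and $\int_\Omega f|u_n|^r$ tend to zero), is precisely the input that lemma needs to conclude $\mu_n\to\infty$. For the location of the solutions I would use \eqref{ej}: since $I_2(u)=q^{-1}\int_\Omega g|u|^q>0$ for $u\neq0$ and $\psi_u''(t_0(u))<0$, the quantity $J_{\mu_n}'(u_n)u_n$ is negative, whence $u_n\in\mathcal{N}_{\mu_n}^-$.

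The only genuinely delicate point, the Palais--Smale compactness, has already been isolated and settled in Proposition~\ref{H2pq}, where the non-homogeneity introduced by $K$ prevents a direct appeal to Theorems~\ref{CCStructureExistence} and~\ref{APPCC} and is handled by hand through the decomposition $\Lambda=\Lambda_{cc}+K$, the strong monotonicity of the $(p,q)$ principal part, and the complete continuity of the lower-order terms. Thus I expect this compactness analysis to be the main obstacle, with the remaining steps above amounting to routine bookkeeping on top of Theorem~\ref{THM1}.
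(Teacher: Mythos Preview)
Your proposal is correct and follows the same route as the paper: verify (H1) on $D=B^+$, invoke Proposition~\ref{H2pq} for (H2) and the Palais--Smale condition, use $W_0^{1,p}(\Omega')\subset D$ to get $\gamma(D\cap S)=\infty$, apply Theorem~\ref{THM1}, then Lemma~\ref{lsi} for $\mu_n\to\infty$ and \eqref{ej} with $I_2>0$ for $u_n\in\mathcal{N}_{\mu_n}^-$. One small slip: in your parenthetical justification for $\Lambda(u_n)\to\infty$, it is $\int_\Omega|\nabla u_n|^p=1$ (the $W_0^{1,p}$-norm) that stays away from zero, not $\int_\Omega|\nabla u_n|^q$; the conclusion is unaffected since $\Lambda_{cc}(u_n)\to\infty$ already and $K\ge0$.
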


\subsection{A semilinear problem}

Lastly, let us consider the problem
\begin{equation}
	\label{Semiequation}
	\left\{
	\begin{array}
		[c]{lll}%
		-\Delta u=\mu u +|u|^{q-2}u-|u|^{r-2}u & \mathrm{in} & \Omega,\\
		u=0  & \mathrm{on} & \partial\Omega,
	\end{array}
	\right. 
\end{equation}
where $2<q<r<2^*$. The energy functional associated to \eqref{Semiequation} is given by
\begin{equation*}\Phi_{\mu}(u)=\frac{1}{2}\int_\Omega|\nabla u|^2-\frac{\mu}{2}\int_\Omega |u|^2-\frac{1}{q}\int_\Omega |u|^q+\frac{1}{r}\int_\Omega |u|^{r},\quad \forall u\in H_0^1(\Omega).
\end{equation*}
It is clear that $\Phi_\mu $ is $C^1$. Moreover we have that
\begin{equation*}
	D=H_0^1(\Omega)\setminus\{0\},
\end{equation*}
\begin{equation*}
	t_0(u)=\left(\frac{r}{q}\frac{q-2}{r-2}\frac{\|u\|_q^q}{\|u\|_r^r}\right)^{\frac{1}{r-q}}\quad \forall u\in D,
\end{equation*}
and
\begin{equation*}
	\Lambda(u)=\frac{\int_\Omega |\nabla u|^2}{\int_\Omega |u|^2}-\frac{2}{q}\frac{r-q}{r-2}\left(\frac{r}{q}\frac{q-2}{r-2}\right)^{\frac{q-2}{r-q}}\frac{(\|u\|_q^q)^\frac{r-2}{r-q}}{\|u\|_2^2\left(\|u\|_r^r\right)^{\frac{q-2}{r-q}}},\quad \forall u\in D.
\end{equation*}

In the sequel we denote by $\lambda_1(\Omega)$ the first eigenvalue of $(-\Delta,H_0^1(\Omega))$, i.e.
$$\lambda_1(\Omega):=\inf_{u \in H_0^1(\Omega) \setminus \{0\}} \frac{\int_\Omega |\nabla u|^2}{\int_\Omega |u|^2}.$$

\begin{lemma}\label{bounded} $\Lambda$ is bounded from below. Moreover, if $(u_n)\subset S$ and $u_n\rightharpoonup 0$, then $\Lambda(u_n)\to \infty$.\end{lemma}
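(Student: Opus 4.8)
The plan is to split $\Lambda$ into its Rayleigh quotient part and the subtracted term, and to control each separately. Writing $\Lambda(u)=R(u)-C_{q,r}\,T(u)$, where $R(u)=\frac{\int_\Omega|\nabla u|^2}{\int_\Omega|u|^2}$, the constant $C_{q,r}=\frac{2}{q}\frac{r-q}{r-2}\left(\frac{r}{q}\frac{q-2}{r-2}\right)^{\frac{q-2}{r-q}}>0$, and
$$T(u)=\frac{(\|u\|_q^q)^{\frac{r-2}{r-q}}}{\|u\|_2^2\,(\|u\|_r^r)^{\frac{q-2}{r-q}}},$$
I would first observe that $R(u)\ge\lambda_1(\Omega)>0$ directly from the variational characterization of the first eigenvalue. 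Thus the boundedness from below reduces to showing that the nonnegative term $T$ is bounded above on $H_0^1(\Omega)\setminus\{0\}$.

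To bound $T$ the key tool is the interpolation inequality $\|u\|_q\le\|u\|_2^{\theta}\|u\|_r^{1-\theta}$, valid for $2<q<r$ with $\theta\in(0,1)$ determined by $\frac1q=\frac\theta2+\frac{1-\theta}{r}$, i.e. $\theta=\frac{2(r-q)}{q(r-2)}$. Raising to the power $q\frac{r-2}{r-q}$ and computing the resulting exponents of $\|u\|_2$ and of $\|u\|_r$, one finds that the $\|u\|_2$ exponent is exactly $2$ while the $\|u\|_r$ exponent is exactly $\frac{r(q-2)}{r-q}$; hence $(\|u\|_q^q)^{\frac{r-2}{r-q}}\le\|u\|_2^2\,(\|u\|_r^r)^{\frac{q-2}{r-q}}$, that is $T(u)\le1$ for every $u\ne0$. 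Consequently $\Lambda(u)\ge\lambda_1(\Omega)-C_{q,r}$, which settles the first assertion. The one point requiring care is precisely this exponent bookkeeping: the interpolation parameter $\theta$ is forced so that the $\|u\|_2$ power matches the denominator of $T$, and this is exactly where $q<r$ (giving $\theta>0$) and $q>2$ (giving $\theta<1$) are used, so I expect that verification to be the main (though routine) obstacle.

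For the second assertion I would take $(u_n)\subset S$ with $u_n\rightharpoonup0$ in $H_0^1(\Omega)$. Since the embedding $H_0^1(\Omega)\hookrightarrow L^s(\Omega)$ is compact for every $s\in[2,2^*)$ and $2<q<r<2^*$, this yields $\|u_n\|_2,\|u_n\|_q,\|u_n\|_r\to0$. In particular $\int_\Omega|u_n|^2\to0$, whereas $\int_\Omega|\nabla u_n|^2=\|u_n\|^2=1$, so that $R(u_n)\to\infty$. On the other hand the bound $0\le T(u_n)\le1$ established above keeps $C_{q,r}T(u_n)$ bounded. Therefore $\Lambda(u_n)=R(u_n)-C_{q,r}T(u_n)\to\infty$, as claimed.
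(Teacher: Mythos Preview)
Your proof is correct and follows essentially the same approach as the paper: both use the interpolation inequality $\|u\|_q\le\|u\|_2^{\theta}\|u\|_r^{1-\theta}$ with $\theta=\frac{2}{q}\frac{r-q}{r-2}$ to obtain $T(u)\le1$, whence $\Lambda(u)\ge R(u)-C_{q,r}\ge\lambda_1(\Omega)-C_{q,r}$, and then conclude the second assertion from this same lower bound together with $R(u_n)\to\infty$. You simply spell out the compact embedding argument that the paper leaves implicit.
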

\begin{proof} Indeed, from the interpolation inequality we have that
	\begin{equation*}
		\|u\|_q\le \|u\|_2^{\frac{2}{q}\frac{r-q}{r-2}}\|u\|_r^{\frac{r}{q}\frac{q-2}{r-2}}, \quad \forall u\in H_0^1(\Omega),
	\end{equation*}
so that
\begin{equation*}
	(\|u\|_q^q)^\frac{r-2}{r-q}\le \|u\|_2^2\left(\|u\|_r^r\right)^{\frac{q-2}{r-q}}, \quad \forall u\in H_0^1(\Omega).
\end{equation*}
It follows that 
	\begin{equation}\label{ela}
		\Lambda(u)\ge\frac{\int_\Omega |\nabla u|^2}{\int_\Omega |u|^2}-\frac{2}{q}\frac{r-q}{r-2}\left(\frac{r}{q}\frac{q-2}{r-2}\right)^{\frac{q-2}{r-q}} \quad \forall u\in D,
	\end{equation}
and, as a consequence,	\begin{equation*} \label{com}
		\Lambda(u)\ge C_\Omega:=\lambda_1(\Omega)- \frac{2}{q}\frac{r-q}{r-2}\left(\frac{r}{q}\frac{q-2}{r-2}\right)^{\frac{q-2}{r-q}} \quad \forall u\in D.
	\end{equation*}
	Moreover, it is clear from \eqref{ela} that $\Lambda(u_n)\to \infty$ if $(u_n)\subset S$ and $u_n\rightharpoonup 0$.
\end{proof}

	\begin{proposition}\label{PS} $\tilde{\Lambda}$ satisfies the Palais-Smale condition.
\end{proposition}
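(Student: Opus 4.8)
The plan is to follow the scheme already used in the proofs of Theorem \ref{CCStructureExistence} and Proposition \ref{H2pq}, but exploiting the Hilbert space structure of $H_0^1(\Omega)$ to deal with the gradient term. Let $(u_n)\subset S$ be a Palais-Smale sequence, so that $(\Lambda(u_n))$ is bounded and $|\Lambda'(u_n)v|\le \epsilon_n\|v\|$ for all $v\in\mathcal{T}_{S\cap D}(u_n)$, with $\epsilon_n\to 0$ (recall $S\cap D=S$ here, since $D=H_0^1(\Omega)\setminus\{0\}$). As $(u_n)\subset S$ is bounded, up to a subsequence $u_n\rightharpoonup u$ in $H_0^1(\Omega)$, and by the Rellich--Kondrachov theorem $u_n\to u$ strongly in $L^s(\Omega)$ for every $s\in[1,2^*)$; in particular in $L^2$, $L^q$ and $L^r$, since $2,q,r<2^*$. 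The first thing I would establish is that $u\neq 0$: if $u=0$, then Lemma \ref{bounded} would force $\Lambda(u_n)\to\infty$, contradicting the boundedness of $(\Lambda(u_n))$. Hence $u\in D$, and $\|u\|_2,\|u\|_q,\|u\|_r$ are all bounded away from zero along the sequence.

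Next I would pass from the constrained to the free derivative exactly as in the earlier proofs: for fixed $w\in H_0^1(\Omega)$ write $w=v_n+t_nu_n$ with $v_n\in\mathcal{T}_{S\cap D}(u_n)$ and $t_n=i'(u_n)w$, where $i(u)=\tfrac12\|u\|^2$; then $(t_n)$, and hence $(v_n)$, are bounded, so combining $\Lambda'(u_n)v_n\to 0$ with the identity $\Lambda'(u_n)u_n=0$ (valid because $\Lambda$ is $0$-homogeneous, by Lemma \ref{Lambda_0p}) gives $\Lambda'(u_n)w\to 0$ for every $w$. I would then test with $w=u_n-u$. In the expansion of $\Lambda'(u_n)(u_n-u)$, every term assembled from $\|u_n\|_2^2,\|u_n\|_q^q,\|u_n\|_r^r$ together with the derivative pairings $\int_\Omega u_n(u_n-u)$, $\int_\Omega|u_n|^{q-2}u_n(u_n-u)$ and $\int_\Omega|u_n|^{r-2}u_n(u_n-u)$ tends to zero, since $u_n\to u$ in $L^2,L^q,L^r$ and the accompanying coefficients converge to finite limits (this is exactly where $u\neq 0$ is used, to keep all denominators bounded away from zero). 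The only surviving contribution is the leading one, $\dfrac{2}{\|u_n\|_2^2}\int_\Omega\nabla u_n\cdot\nabla(u_n-u)$, whose coefficient stays bounded away from $0$ and $\infty$; therefore $\int_\Omega\nabla u_n\cdot\nabla(u_n-u)\to 0$.

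Finally, weak convergence $u_n\rightharpoonup u$ gives $\int_\Omega\nabla u\cdot\nabla(u_n-u)\to 0$, whence
$$\|u_n-u\|^2=\int_\Omega\nabla u_n\cdot\nabla(u_n-u)-\int_\Omega\nabla u\cdot\nabla(u_n-u)\to 0,$$
i.e. $u_n\to u$ strongly in $H_0^1(\Omega)$. Since $\|u_n\|=1$ this yields $\|u\|=1$, and together with $u\neq 0$ we conclude $u\in S=S\cap D$, which is precisely the Palais-Smale condition. The step I expect to be the crux is ensuring $u\neq 0$, because it is exactly what keeps the denominators in $\Lambda'$ nondegenerate and lets one isolate the gradient term with a coefficient bounded away from zero; once that is secured, the remaining compactness is automatic from the compact Sobolev embeddings and the Hilbert structure, so no monotonicity or uniform-convexity argument (as in condition (5)) is needed here.
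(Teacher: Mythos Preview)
Your proof is correct and follows essentially the same route as the paper: both use Lemma \ref{bounded} to guarantee $u\neq 0$, pass from the constrained to the free derivative exactly as in Theorem \ref{CCStructureExistence}, test with $u_n-u$, and observe that the only surviving term is $c_n\int_\Omega \nabla u_n\cdot\nabla(u_n-u)$ with $c_n$ bounded away from zero. The only difference is cosmetic: the paper compresses the conclusion into the line $0=\Lambda'(u_n)(u_n-u)+o_n(1)=-c\,\Delta u_n(u_n-u)+o_n(1)$, while you spell out the Hilbert identity $\|u_n-u\|^2=\int_\Omega\nabla u_n\cdot\nabla(u_n-u)-\int_\Omega\nabla u\cdot\nabla(u_n-u)$ explicitly.
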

\begin{proof} For simplicity we write
		$C=\frac{2}{q}\frac{r-q}{r-2}\left(\frac{r}{q}\frac{q-2}{r-2}\right)^{\frac{q-2}{r-q}}$.
Note that $\Lambda'=K_1-K_2$, where
\begin{equation*}
	K_1(u)v=\frac{\|u\|_2^2\int_\Omega \nabla u\nabla v-\|\nabla u\|_2^2\int_\Omega uv}{\|u\|_2^4}
\end{equation*}
and $K_2$ is given by
\begin{eqnarray*}
	(\|u\|_2^2\left(\|u\|_r^r\right)^{\frac{q-2}{r-q}})^2\frac{K_2(u)v}{C}&=&q\frac{r-2}{r-q}\|u\|_2^2\|u\|_r^{r\frac{q-2}{r-q}}\|u\|_q^{q\frac{q-2}{r-q}}\int_\Omega |u|^{q-2}uv \\ &&-\|u\|_q^{q\frac{r-2}{r-q}}\left(r\frac{q-2}{r-q}\|u\|_2^2\|u\|_r^{r\frac{2q-2-r}{r-q}}\int_\Omega |u|^{r-2}uv+2\|u\|_r^{r\frac{q-2}{r-q}}\int_\Omega uv\right).
\end{eqnarray*}

	Now suppose that $\tilde{\Lambda}(u_n)$ is bounded and $\tilde{\Lambda}'(u_n)\to 0$. Up to a subsequence, we can asusme that $u_n \rightharpoonup u$, and from Lemma \ref{bounded} we have $u\neq 0$. Arguing as in the proof of Theorem \ref{CCStructureExistence} we conclude that $\Lambda'(u_n)\to 0$, so that
	\begin{equation*}
		0=\Lambda'(u_n)(u_n-u)+o_n(1)=-c\Delta u_n(u_n-u)+o_n(1),
	\end{equation*}
for some non-zero constant $c$, which implies that $u_n\to u$ in $S$.
\end{proof}

Since $I_2(u)=\frac{1}{2}\int_\Omega |u|^2>0$ for every $u \neq 0$, we infer the following result:

\begin{theorem}Under the previous conditions there exists a sequence $(\mu_n) \subset (C_\Omega,\infty)$ with the following properties:
\begin{enumerate}
\item $\mu_n \to \infty$.
\item  \eqref{Semiequation} has no nontrivial weak solution having zero energy for $\mu<\mu_1$.
\item There exists a sequence $(u_n) \subset D$ such that $\Phi_{\mu_n}(\pm u_n)=0$ and $\Phi_{\mu_n}'(\pm u_n)=0$, i.e. $\pm u_n$ are weak solutions of \eqref{Semiequation} with $\mu=\mu_n$, having zero energy.
 Moreover $u_n \in \mathcal{N}_{\mu_n}^-$ for every $n$.
 \end{enumerate}
\end{theorem}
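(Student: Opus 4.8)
The plan is to deduce the theorem directly from Theorem \ref{THM1}, in the same way the abstract classes of Section~3 were applied, since the two genuinely analytic ingredients — that $\Lambda$ is bounded from below and that $\tilde{\Lambda}$ satisfies the Palais--Smale condition — have already been secured in Lemma \ref{bounded} and Proposition \ref{PS}. What remains is to verify (H1), to check that $\gamma(S\cap D)=\infty$, and then to translate each clause of Theorem \ref{THM1} into the three assertions.

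First I would verify (H1). With $I_2(u)=\frac12\|u\|_2^2$ and $I_1(u)=\frac12\|\nabla u\|_2^2-\frac1q\|u\|_q^q+\frac1r\|u\|_r^r$, a direct computation gives the fibering map
\begin{equation*}
\psi_u(t)=\frac{\|\nabla u\|_2^2}{\|u\|_2^2}-\frac{2}{q}\frac{\|u\|_q^q}{\|u\|_2^2}\,t^{q-2}+\frac{2}{r}\frac{\|u\|_r^r}{\|u\|_2^2}\,t^{r-2},
\end{equation*}
which, since $2<q<r$, is smooth in $t$, $C^1$ in $u$, and has exactly one critical point $t_0(u)>0$, the value already displayed before the statement, with $\psi_u''(t_0(u))\neq 0$. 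Because here the higher-order term carries the positive sign, $t_0(u)$ is a nondegenerate global \emph{minimum} rather than a maximum, so $\Lambda(u)=\psi_u(t_0(u))=\min_{t>0}\psi_u(t)$. The fibering analysis of Lemma \ref{Lambda_0p} and Proposition \ref{propciritcal} depends only on the uniqueness and nondegeneracy of $t_0(u)$ (the implicit function theorem applies since $\psi_u''(t_0(u))\neq 0$), so it carries over unchanged, the arguments of Section~2 being insensitive to the max/min distinction apart from the sign they assign in \eqref{ej}; the sole effect is that $\Lambda$ is bounded from below, which is precisely Lemma \ref{bounded}. Since $D=H_0^1(\Omega)\setminus\{0\}$ is open and nonempty, (H1) holds, and $S\cap D=S$ is the unit sphere of the infinite-dimensional space $H_0^1(\Omega)$, whence $\gamma(S\cap D)=\infty$.

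With (H1), (H2) (Lemma \ref{bounded}), $\gamma(S\cap D)=\infty$, and the Palais--Smale condition at every level (Proposition \ref{PS}) in hand, Theorem \ref{THM1} applies in its ``bounded from below'' form, with $\mu_n=\inf_{F\in\mathcal{F}_n}\sup_{u\in F}\Lambda(u)$. Its part~(2) produces the pairs $\pm u_n$ with $\Phi_{\mu_n}(\pm u_n)=\Phi'_{\mu_n}(\pm u_n)=0$, giving assertion~(3); the membership in the appropriate Nehari submanifold follows from \eqref{ej} together with $I_2(u)=\frac12\|u\|_2^2>0$, exactly as in Theorem \ref{CCStructureExistence}. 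Part~(1) of Theorem \ref{THM1} yields assertion~(2), since a zero-energy weak solution of \eqref{Semiequation} is precisely a nonzero $u$ with $\Phi_\mu(u)=\Phi'_\mu(u)=0$, and $\mu_1=\inf_D\Lambda\ge C_\Omega$ by Lemma \ref{bounded}. Finally, for assertion~(1) I would feed the second conclusion of Lemma \ref{bounded} — that $\Lambda(u_n)\to\infty$ whenever $(u_n)\subset S$ with $u_n\rightharpoonup 0$ — into Lemma \ref{lsi} to conclude $\mu_n\to\infty$, just as in the last paragraph of the proof of Theorem \ref{CCStructureExistence}.

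I do not expect a serious obstacle: the delicate compactness step (Palais--Smale) and the coercivity of $\Lambda$ are already established, so the only point demanding care is the bookkeeping in (H1) — in particular recording that the unique fibering critical point is a nondegenerate \emph{minimum} and checking that the abstract machinery of Section~2 is unaffected by this. One should also keep in mind that this problem is treated on its own rather than as a corollary of Theorems \ref{CCStructureExistence}--\ref{APPCC}, precisely because its homogeneity/sign pattern falls outside both abstract classes, so the compactness argument of Proposition \ref{PS} must be run by hand in place of hypotheses (3) and (5).
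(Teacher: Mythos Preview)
Your approach is exactly the paper's: the ``proof'' in the paper consists of the single sentence ``Since $I_2(u)=\frac{1}{2}\int_\Omega |u|^2>0$ for every $u \neq 0$, we infer the following result'', leaving the reader to assemble Theorem \ref{THM1}, Lemma \ref{bounded}, Proposition \ref{PS}, and Lemma \ref{lsi} precisely as you do.

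You are in fact more careful than the paper on one point: you observe that the unique critical point $t_0(u)$ of $\psi_u$ is a nondegenerate global \emph{minimum}, not a maximum, so (H1) does not hold verbatim, and you correctly note that the machinery of Section~2 (the implicit function theorem, Lemma \ref{Lambda_0p}, Proposition \ref{propciritcal}) only uses $\psi_u''(t_0(u))\neq 0$ and goes through unchanged. The paper silently glosses over this. However, be aware that this very observation affects the last clause of assertion~(3): in \eqref{ej} one now has $\psi_u''(t_0(u))>0$ and $I_2>0$, hence $J_{\Lambda(u)}'(t_0(u)u)\,t_0(u)u>0$, which places $u_n$ in $\mathcal{N}_{\mu_n}^{+}$, not $\mathcal{N}_{\mu_n}^{-}$. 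Your phrase ``exactly as in Theorem \ref{CCStructureExistence}'' is therefore misleading --- there $t_0$ was a maximum and the sign gave $\mathcal{N}^{-}$, whereas here your own analysis yields $\mathcal{N}^{+}$. This suggests the ``$\mathcal{N}_{\mu_n}^{-}$'' in the theorem statement is a slip in the paper that your argument actually exposes.
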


\medskip
\appendix
\section{}

\begin{lemma}\label{lsi}
Let $L : S \rightarrow \mathbb{R}$ be bounded from below, and $l_n:=\displaystyle \inf_{F\in \mathcal{F}_n}\sup_{u\in F} L(u)$,
where $\mathcal{F}_n$ is given by \eqref{dfn}, for every $n \in \mathbb{N}$.
Assume in addition that $L(u_n) \to \infty$ if $u_n \rightharpoonup 0$ in $X$. Then  $l_n \to \infty$ as $n \to \infty$.	
\end{lemma}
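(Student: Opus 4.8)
The plan is to argue by contradiction, combining the finite-dimensional intersection property of the Krasnoselskii genus with the weak compactness of bounded sequences in the reflexive space $X$ (recall that uniform convexity implies reflexivity).

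First I would record two preliminary observations. Since $\gamma(F)\ge n+1$ implies $\gamma(F)\ge n$, we have $\mathcal{F}_{n+1}\subseteq \mathcal{F}_n$, so $(l_n)$ is nondecreasing; moreover $l_n\ge \inf_S L>-\infty$ because $L$ is bounded from below. Hence $l_n$ converges in $(-\infty,+\infty]$ and it suffices to exclude a finite limit. Suppose then, for contradiction, that $l_n\le l<\infty$ for every $n$, and put $c:=l+1$. By the definition of $l_n$ as an infimum, for each $n$ I may pick $F_n\in\mathcal{F}_n$ (that is, $F_n$ compact and symmetric with $\gamma(F_n)\ge n$) such that $\sup_{F_n}L<c$; in particular $F_n\subset\{u:L(u)<c\}$.

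The core step is to extract from the $F_n$ a sequence tending weakly to $0$. Fix a countable total family $(f_i)\subset X^*$, i.e. one for which $f_i(x)=0$ for all $i$ forces $x=0$; such a family exists whenever $X$ is separable (apply Hahn--Banach to a dense sequence), which is the case in all the applications. For each $n$ the map $u\mapsto(f_1(u),\dots,f_{n-1}(u))$ is linear, hence odd and continuous, from $F_n$ into $\mathbb{R}^{n-1}$; if it never vanished on $F_n$ it would witness $\gamma(F_n)\le n-1$, contradicting $\gamma(F_n)\ge n$. Therefore there is $u_n\in F_n$ with $f_i(u_n)=0$ for all $i\le n-1$. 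Since $(u_n)\subset S$ is bounded, reflexivity gives that any subsequence has a further subsequence $u_{n_k}\rightharpoonup v$; as $f_i(u_{n_k})=0$ for $n_k>i$, we get $f_i(v)=0$ for every $i$, and totality yields $v=0$. Thus $0$ is the only weak cluster point of the bounded sequence $(u_n)$, which forces $u_n\rightharpoonup 0$.

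It now remains only to invoke the hypothesis: from $u_n\rightharpoonup 0$ we obtain $L(u_n)\to\infty$, contradicting $L(u_n)<c$ for all $n$. This contradiction shows $l_n\to\infty$. I expect the main obstacle to be the weak-convergence step $u_n\rightharpoonup 0$: it rests on the availability of a countable total family in $X^*$ (automatic under separability, hence in every application, but using slightly more than mere reflexivity) and on the standard fact that a bounded sequence with $0$ as its unique weak cluster point converges weakly to $0$. The genus intersection property is the other key ingredient, but it is classical.
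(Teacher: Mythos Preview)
Your argument is correct (under the separability proviso you flag), and it reaches the same contradiction as the paper but by a genuinely different and more elementary route. The paper argues as follows: assuming $\sup_{F_{k_n}}L\le M$ along a subsequence, it invokes a lemma from Zeidler producing odd continuous finite-rank operators $P_n:X\to X_n$ with $\|P_n u\|\le\|u\|$ and $P_n u_n\rightharpoonup u$ whenever $u_n\rightharpoonup u$; it then shows that for some fixed $n_0$ one has $\|P_{n_0}u\|\ge\delta>0$ on the whole sublevel set $B_M=\{u\in S:L(u)\le M\}$ (otherwise one extracts $u_n\rightharpoonup 0$ in $B_M$ and contradicts the hypothesis), and finally derives a genus contradiction since $P_{n_0}$ maps $F_{k_n}\subset B_M$ into $X_{n_0}\setminus\{0\}$ with $\dim X_{n_0}<k_n$.

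By contrast, you bypass the projection machinery entirely: you exploit the elementary intersection property of the genus (a set of genus $\ge n$ meets the common zero set of any $n-1$ odd continuous functionals) together with a countable total family in $X^*$ to produce directly a sequence $u_n\in F_n\subset\{L<c\}$ with $u_n\rightharpoonup 0$. This is shorter and uses only basic genus facts. Both approaches ultimately rely on separability---yours explicitly through the total sequence $(f_i)$, the paper's implicitly through Zeidler's approximation lemma---so neither is more general in that respect; your version just makes the dependence transparent.
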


\begin{proof}
We argue as in \cite{FL} to show that given $T>0$ there exists $n_T\in\mathbb{N}$ such that $\displaystyle \sup_{u\in F}L(u)>T$ for all $n\geq n_L$ and $F\in\mathcal{F}_n$. 
Assume by contradiction that 
	\begin{equation}\label{0contr}
		\exists M>0, k_n \to +\infty \mbox{ and } F_{k_n} \in \mathcal{F}_{k_n} \mbox{ such that  } \sup_{u\in F_{k_n}}L(u)\leq M, \ \forall n\in \mathbb N.
	\end{equation}
	By \cite[Lemma 44.32]{Z}, for every $n$ there exists a finite-dimensional subspace $X_n$ of $X$ and an odd continuous
	operator $P_n: X \to X_n$ such that $\|P_n(u)\|\leq \|u\|$ for all $u\in X$,  and $P_n(u_n)\rightharpoonup u$ in $X$ if $u_n\rightharpoonup u$ in $X$. 
	We claim that there exists $n_0=n(C,M)\in\mathbb{N}$ such that $\|P_{n_0}(u)\|>\delta$ for every  $u\in B_M=\lbrace u\in S: L(u)\leq M\rbrace$ and for some $\delta>0$. Otherwise, there exists a sequence $\{u_n\}\subset B_M$ with $\|P_{n}(u_n)\| \leq {1}/{n}$. Since $\{u_n\}$ is bounded, up to a subsequence, $u_n\rightharpoonup u$ and $P_n(u_n)\rightharpoonup u$ in $X$. On the other hand  $\|P_n(u_n)\|\to 0$ and consequently  $u=0$. By our assumption, we reach the contradiction $L(u_n)\to \infty$.
	 Therefore the claim is proved. Let now $k_0=d_0+1$, with $d_0=\dim X_{n_0}$. By \eqref{0contr} there exists $k_n>k_0$ such that  $F_{k_n}\subset B_M$. Thus $P_{n_0}(u)\neq 0$ for all  $u\in F_{k_n}$. Since $P_n$ is continuous we have that $\gamma(P_{n_0}(F_{k_n}))\geq \gamma(F_{k_n})\geq k_n \geq k_0$. Now, considering the inclusion map $j:P_{n_0}(F_{k_n})\to X_{n_0}$ and an isomorphism $\phi : X_{n_0}\to\mathbb{R}^{d_0}$, we have that $\hat{h}:=\phi\circ j: P_{n_0}(F_{k_n}) \to \mathbb{R}^{d_0}$ is an odd map. Hence $\gamma(P_{n_0}(F_{k_n}))\leq d_0$, and we obtain $d_0\geq \gamma(P_{n_0}(F_{k_n}))\geq k_0=d_0+1$, which is a contradiction. Therefore, \eqref{0contr} cannot occur, and we deduce that $l_n \to+ \infty$ as $n \to \infty$.

\end{proof}

\end{document}